\def\rmlog{{\rm log}}
\def\..{{,\dots,}}
\def\rt{{rt}}
\def\tw{{w}}
\def\sw{{sw}}
\def\hu{{hu}}
\def\Loc{{\rm Val}}
\begin{document}

\author{Michael Temkin}
\title{Tame distillation and desingularization by $p$-alterations}

\address{Einstein Institute of Mathematics, The Hebrew University of Jerusalem, Giv'at Ram, Jerusalem, 91904, Israel}
\email{temkin@math.huji.ac.il}
\keywords{Resolution of singularities, alterations, tame distillation, valuations.}
\thanks{This work was supported by the Israel Science Foundation (grant No. 1159/15).}

\begin{abstract}
We strengthen Gabber's $l'$-alteration theorem by avoiding all pri\-mes invertible on a scheme. In particular, we prove that any scheme $X$ of finite type over a quasi-excellent threefold can be desingularized by a $\cha(X)$-alteration, i.e. an alteration whose order is only divisible by primes non-invertible on $X$. The main new ingredient in the proof is a tame distillation theorem asserting that, after enlarging, any alteration of $X$ can be split into a composition of a tame Galois alteration and a $\cha(X)$-alteration. The proof of the distillation theorem is based on the following tameness theorem that we deduce from a theorem of M. Pank: if a valued field $k$ of residue characteristic $p$ has no non-trivial $p$-extensions then any algebraic extension $l/k$ is tame.
\end{abstract}

\maketitle

\section{Introduction}

\subsection{Background}
This paper falls within the area of resolution of singularities by alterations, so we start with a brief review of known altered desingularization results.

\subsubsection{de Jong's theorems}
In \cite[Theorem~4.1]{dJ} Johan de Jong proved that, regardless of the characteristic of the ground field, an integral variety $X$ can be desingularized by an {\em alteration} $b\:X'\to X$, i.e. a proper dominant generically finite morphism between integral schemes. In addition, given a closed subset $Z\subsetneq X$ one can achieve that $Z'=b^{-1}(Z)$ is a simple normal crossings (snc) divisor, and $f$ can be chosen {\em $G$-Galois} in the sense that the alteration $X'/G\to X$ is generically radicial, where $G=\Aut_X(X')$, see \cite[Theorem~7.3]{dJ}. de Jong's altered desingularization was the first resolution result that applies in such generality and it immediately found numerous applications. Also, de Jong proved an altered version of semistable reduction for an integral scheme $X$ over an excellent curve $S$, see \cite[Theorem~8.2]{dJ}. The latter can be viewed as altered desingularization of morphisms $f\:X\to S$ with $S$ a curve.

\subsubsection{Gabber's $l'$-strengthening}
The results of \cite{dJ} were strengthened a few times by de Jong and others, see e.g. \cite{dJ1}, \cite[Section~4]{Vidal} and \cite{X}. The most recent and powerful advance is Gabber's $l'$-altered desingularization: it is proved in \cite[Theorems~2.1 and 2.4]{X} that if $l$ is a prime invertible on $X$ then one can achieve that the degree $[k(X'):k(X)]$ of the alteration $b$ is not divisible by $l$ both in the altered desingularization and altered semistable reduction theorems. This extended the field of applications of altered desingularization to cohomology theories with coefficients where $l$ is not inverted, for example, $\bfZ/l\bfZ$ or $\bfZ_l$.

\subsubsection{$l$'-altered desingularization of morphisms}
When working out Gabber's proof, Luc Illusie and the author discovered a more general statement, which can be viewed as $l'$-altered desingularization of arbitrary finite type morphisms and contains the two theorems of Gabber as the particular cases with $\dim(S)$ equal to $0$ and $1$, see \cite[Theorem~3.5]{X}. In addition, in the characteristic zero case the same method provided an actual desingularization of morphisms \cite[Theorem~3.9]{X}, which extended the semistable reduction theorem of Abramovich and Karu \cite[Theorem~0.3]{Abramovic-Karu} from varieties to arbitrary quasi-excellent schemes.

\subsection{Main desingularization results}
Now, let us describe the advance this paper makes in the theory of altered desingularization.

\subsubsection{$\cha(X)$-altered desingularization}
Given a scheme $X$, by $\cha(X)$ we denote the set of all primes $p$ with $p=\cha(k(x))$ for some $x\in X$. Let $\calP$ be a set of primes. We say that $n\in\bfN$ is a {\em $\calP$-number} if all its prime divisors lie in $\calP$. By a {\em $\calP$-alteration} we mean an alteration whose degree is a $\calP$-number. The main goal of this paper is to strengthen the $l'$-altered desingularization of morphisms of \cite[Theorem~3.5]{X} so that the resolving alteration $b\:X'\to X$ is a $\cha(X)$-alteration. In particular, this unifies almost all previous results, including the characteristic zero case. The only thing which is lost is a control on the Galois properties of the alteration, for example, we do not achieve that $b$ is Galois.

\subsubsection{Main theorem}
Our main altered desingularization result is Theorem \ref{mainth}. Its formulation involves some terminology introduced in Section~\ref{desingsec}. Loosely speaking, the theorem asserts that if $f\:X\to S$ is a morphism of finite type and any alteration of $S$ can be desingularized by a $\cha(S)$-alteration then both $X$ and $f$ can be desingularized by a $\cha(X)$-alteration.

For any practical application, we should start with a class of schemes $S$ that satisfy the desingularization property as above. By a recent theorem of Cossart-Piltant, see \cite{CP}, any qe (i.e. quasi-excellent) threefold admits a desingularization, so we can take $S$ to be a qe threefold. This gives rise to two main consequences of Theorem \ref{mainth} that we are going to formulate. The proofs will be given in the end of Section~\ref{desingsec}.

\begin{rem}\label{CPrem}
We would like to clarify the current status of desingularization. There are published proofs for the case of qe surfaces, see e.g. \cite{Lip}, and for the case of $k$-varieties of dimension at most 3 under the minor assumption that the imperfection rank of $k$ is finite (i.e., if $p=\cha(k)>0$ then $[k:k^p]<\infty$), see \cite{CP0}. In the preprint \cite{CP}, Cossart and Piltant extend their method from \cite{CP0} to arbitrary qe threefolds, and experts in the desingularization theory think that the proof is correct. If the reader prefers not to use unpublished results, he should use the class of qe surfaces and 3-dimensional varieties instead of qe 3-folds.
\end{rem}

\subsubsection{Absolute desingularization}
For generality of formulations we consider non-integral schemes, so we refer to \S\ref{altsec} for a general definition of alterations in this context. Note, however, that the general case is not essentially stronger than the case of integral $X$ and $S$, in which it suffices to consider alterations as defined earlier.

\begin{theor}\label{absth}
Let $X$ be a scheme with a nowhere dense closed subset $Z$ and assume that $X$ admits a morphism of finite type to a qe scheme $S$ with $\dim(S)\le 3$. Then there exists a projective $\cha(X)$-alteration $b\:X'\to X$ with a regular source such that $Z'=b^{-1}(Z)$ is an snc divisor. Moreover, if $S=\Spec(k)$, where $k$ is a perfect field, then the alteration $b$ can be chosen separable.
\end{theor}

\begin{rem}
(i) This result is based on \cite{CP}. Until the latter is published, the reader may wish to only consider the case when either $\dim(S)\le 2$ or $S$ is of dimension $3$ over a field $k$ of a finite imperfection rank. In this case, the assertion is based on published results, see Remark~\ref{CPrem}.

(ii) The same remark applies to Theorem~\ref{relth} below.
\end{rem}

\subsubsection{Log smoothness}
To formulate our main result on desingularization of morphisms we will need a little bit of log geometry. We will provide references to general definitions, and make this explicit in the particular case we will use. Log schemes and log smooth morphisms between them are defined in \cite{Kato-log}. Log regular log schemes are defined in \cite{Kato-toric}, see also \cite{Niziol}. Given a scheme $X$ with a closed subscheme $Z$ consider the log structure $\calM_X=\calO_X\cap i_*\calO_U^\times$, where $i\:U=X\setminus Z\into X$ is the complementary open immersion. In general, it is not even fine. For example, this is the case of the cuspidal curve $X=\Spec(k[x^2,x^3])$ with $Z$ given by $x=0$: the stalk $\oM_{X,Z}$ is easily seen to be not finitely generated since it contains independent elements $x^2+ax^3$ with $a\in k$ (see also \cite[Remark~3.2.7(iii)]{VIII}). Nevertheless, in the situation we are going to use it will always be the case that $\calM_X$ is fs and the fs log scheme $(X,\calM_X)$ is log regular. For example, this happens when $X$ is regular and $Z$ is snc. For shortness, we will denote the log scheme $(X,\calM_X)$ by $(X,Z)$.

If $Z\into X$ and $T\into Y$ are closed immersions then any morphism $f\:X\to Y$ such that $T\times_YX\into Z$ induces a morphism of log schemes $(X,Z)\to (Y,T)$. Assume that $X$ and $Y$ are regular and $Z$ and $T$ are snc. Then it follows easily from \cite[Theorem~3.5]{Kato-log} that $f\:(X,Z)\to(Y,T)$ is log smooth if and only if for any point $x\in X$ with $y=f(x)$ there exist regular parameters $t_1\..t_s\in\calO_{Y,y}$ defining $T$ at $y$ and $z_1\..z_r\in\calO_{X,x}$ defining $Z$ at $x$ such that:
\begin{itemize}
\item[(1)]
$f^*(t_i)=\prod_{j=1}^rz_j^{l_{ij}}$ for $1\le i\le s$,
\item[(2)] both the kernel and the torsion part of the cokernel of the induced map $l\:\bfZ^s\to\bfZ^r$ are finite of order invertible in $k(y)$,
\item[(3)]
for small enough neighborhoods $X_0$ of $x$ and $Y_0$ of $y$ the induced morphism $X_0\to Y_0\times_{\Spec(\bfZ[t_1\..t_s])}\Spec(\bfZ[z_1\..z_r])$ is smooth.
\end{itemize}

\subsubsection{Desingularization of morphisms}
A morphism is {\em maximally dominating} if it takes generic points to generic points.

\begin{theor}\label{relth}
Let $f{\colon}X\to S$ be a maximally dominating morphism of finite type, let $Z\subset X$ be a nowhere dense closed subset, and assume that $X$ is separated and $S$ admits a morphism of finite type to a noetherian qe threefold. Then,

(i) There exist $\cha(S)$-alterations with regular sources $a{\colon}S'\to S$ and $b{\colon}X'\to X$, a quasi-projective morphism $f'{\colon}X'\to S'$ compatible with $f$ and snc divisors $W'\subset S'$ and $Z'\subset X'$ such that $a$ and $b$ are projective, $Z'=b^{-1}(Z)\cup f'^{-1}(W')$, and the morphism $(X',Z')\to(S',W')$ is log smooth.

(ii) If $S=\Spec(k)$, where $k$ is a perfect field, then one can achieve in addition to (i) that $a$ is an isomorphism and the alteration $b$ is separable.
\end{theor}

Also, it is possible to remove the separatedness assumption on $X$ in Theorem~\ref{relth} at cost of considering a pseudo-projective $f'$, see Theorem~\ref{mainth}.

\subsection{The method and distillation theorems}
Our proof of Theorem~\ref{mainth} is very close to the proof of its predecessor \cite[Theorem~3.5]{X}, with Theorem~\ref{altdistth} being the only new ingredient. Theorem~\ref{altdistth} constructs tame distillations of alterations, and its proof occupies most of the paper. First, we prove analogous distillation theorems for extensions of fields, and then use the latter to prove the distillation theorem for alterations. Let us now describe our method, distillation theorems, and the structure of the paper.

\subsubsection{Distillation}
To illustrate the term ``distillation", let us consider the classical theory of separable and inseparable field extensions. Any algebraic extension $l/k$ canonically factors as $l/l_s/k$, where $l/l_s$ is purely inseparable and $l_s/k$ is separable. By a separable distillation of $l/k$ we mean an opposite factoring $l/l_i/k$, in which the bottom extension is purely inseparable and the top one is separable. It is non-canonical and may not exist, see e.g. \cite[V.145, \S7, Exercices 1--3]{BouAlg} for various related examples. In fact, existence of a separable distillation is equivalent to splitting the tower $l/l_s/k$, i.e. splitting $l$ as $l_s\otimes_kl_i$. The algebraic closure $k^a/k$ possesses a separable distillation $k^a/k^{1/p^\infty}/k$ (which is even canonical) and it follows easily that for any finite extension $l/k$ there exists a larger finite extension $l'/k$ possessing a separable distillation. Let us call this simple result separable distillation theorem.

\subsubsection{Tame distillation for valued fields}
Wild ramification in the theory of valued fields is an analogue of inseparability in the usual field theory. In particular, both notions coincide when the valuation is trivial. It turns out that the separable distillation theorem has an analogue in the theory of valued fields, a tame distillation theorem, though the latter result is already not so simple. If a valued field $k$ is henselian, this is essentially a theorem of M. Pank: the tower $k^a/k^t/k$ splits, where $k^t$ is the tame closure of $k$. In fact, $k^a=k^t\otimes_kk_\tw$, where $k_\tw/k$ is any maximal purely wild extension. We recall Pank's theorem and discuss its various reformulations in Section~\ref{panksec}.

It was known to F. Pop (unpublished), but perhaps not to other experts on valuation theory, that Pank's theorem extends to non-henselian valued fields. In order to minimize the use of valuation theory in this paper, we only establish the splitting theorem when the height of $k$ is one, see Theorem~\ref{htonesplit}. It is deduced from Pank's theorem via a decompletion argument. Using induction on height one can then extend the generalized Pank's theorem to valued fields of arbitrary height, but this is rather technical and will be worked out elsewhere.

For our purposes, it will be enough to establish in Theorem~\ref{tameth} the following tameness theorem: if $k$ has no non-trivial $p$-extensions then any algebraic extension $l/k$ is tame. In fact, tameness theorem is weaker than the generalized Pank's theorem, so the case of height one follows from Theorem~\ref{htonesplit}, and the general case will be established by induction on height (which is less technical in this case).

\subsubsection{Riemann-Zariski spaces and globalization}
Let $L/K$ be an algebraic extension of fields. Our next aim is to obtain tame distillation simultaneously for a set $S$ of valuations on $L$. Naturally, we should assume that $S$ is quasi-compact in $\Val_L$ (see Section~\ref{rzsec}) and $L/K$ is finite. By $\cha(S)$ we denote the set of non-zero residue characteristics of $S$ and we say that $L/K$ is $S$-tame if $L/K$ is tame for any valuation of $S$. Tame distillation for abstract fields is proved in Theorem~\ref{extdistth}; it implies, in particular, that there exists a finite extension $L'/L$ such that $L'/K$ splits into a composition of a $\cha(S)$-extension $K'/K$ and an $S'$-tame extension $L'/K'$, where $S'$ is the preimage of $S$ in $\Val_{L'}$.

Our proof of Theorem~\ref{extdistth} is based on Theorem~\ref{tameth} and the following facts about Riemann-Zariski spaces: $\Val(L)$ is quasi-compact, the map $\psi_{L/K}:\Val(L)\to\Val(K)$ induced by $L/K$ is open, and the tame locus of $\psi_{L/K}$ is open.

\subsubsection{Tame distillation of alterations}
A further globalization of tame distillation is done in Theorem~\ref{altdistth} asserting that any alteration $b\:Y\to X$ can be enlarged to a Galois alteration $Y'\to X$ that splits into a composition of a tame Galois alteration $Y'\to X'$ and a $\cha(X)$-alteration $X'\to X$. The main point of its proof is to apply the tame distillation theorem to $k(Y)/k(X)$ with $S$ being the set of all valuations of $k(Y)$ with center on $X$.

\subsubsection{Main theorem}
Finally, let us briefly explain how tame distillation of alterations is used to obtain $\cha(X)$-altered desingularization. The main idea of Gabber's construction of $l'$-alterations is as follows: first obtain a $G$-Galois alteration $X'\to X$ by de Jong's methods, and then fix an $l$-Sylow subgroup $G_l\subseteq G$ and pass to $X/G_l\to X$. Finally, the action of $G_l$ is tame, hence if $X'$ is regular then the singularities of $X'/G_l$ can be resolved by Gabber's modification theorem \cite[Theorem~1.1]{X}. This scheme is used both in the proof of Gabber's theorems \cite[2.1 and 2.4]{X}, and in the proof of their refinement \cite[Theorem~3.5]{X}.

If there exists a subgroup $H\subseteq G$ such that $H$ acts tamely on $X'$ and any prime dividing $|G/H|$ is in $\cha(X)$, then we can obtain a $\cha(X)$-altered desingularization in the same manner. By the tame distillation theorem such an $H$ exists after enlarging the alteration. Fortunately, this does not affect the proof of \cite[Theorem~3.5]{X}, which extends almost verbatim once Sylow subgroups are replaced with the subgroups produced by tame distillation. 

\subsection{Acknowledgments}
I wish to thank F. Pop and F.-V. Kuhlmann for answering my questions about Pank's theorem. In particular, Florian Pop informed me about his unpublished generalization of Pank's theorem to the non-henselian case. I am grateful to L. Illusie, V. Varshavsky and I. Tyomkin for useful discussions. Also I wish to thank G. Pohl and the anonymous referee for pointing out some gaps and inaccuracies in the first version of the paper.

\setcounter{tocdepth}{1}
\tableofcontents

\section{Splitting off wild extensions}

\subsection{Field extensions}

\subsubsection{$\calP$-extensions}\label{calPsec}
First, we introduce a terminology that extends the notions of $l'$-extensions and $\cha(X)$-extensions from the introduction. Throughout the text, we will often denote by $\calP$ a set of primes. Then the set of all primes not contained in $\calP$ will be denoted $\calP'$. A typical example of $\calP$ is the set $\cha(X)$ of all non-zero residue characteristics of a scheme $X$. Also, to any natural number $n\ge 1$ we associate the set $\calP(n)=(\cha(\Spec\,\bfZ[\frac{1}{n}]))'$ of its prime divisors. For example, $\cha(\Spec\,\bfQ)=\calP(1)=\emptyset$ and $\cha(\Spec\,\bfZ_{(2)})=\calP(8)=\{2\}$.

A {\em $\calP$-group} is a finite group $G$ such that $|G|$ is a $\calP$-number, that is, any prime divisor of $|G|$ lies in $\calP$. Analogously, an algebraic field extension $L/K$ is called a {\em $\calP$-extension} if $[L':K]$ is a $\calP$-number for any finite subextension $L'/K$. Note that we neither require that $L/K$ is Galois nor that its Galois closure is a $\calP$-extension. For shortness, we will say $n$-extension, $n'$-group, etc., instead of $\calP(n)$-extension, $\calP(n)'$-group, etc. For example, $1'$-extension is an arbitrary algebraic extension.

The class of $\calP$-extensions is transitive (i.e. if $F/L$ and $L/K$ are $\calP$-extensions then $F/K$ is so) and closed under taking subextensions and filtered union: these claims reduce to the case of finite extensions and then follows from the fact that for a tower of finite extensions $F/L/K$, the degree $[F:K]=[F:L][L:K]$ is a $\calP$-number if and only if both $[F:L]$ and $[L:K]$ are $\calP$-numbers. In particular, Zorn's lemma implies that any field $K$ possesses a maximal $\calP$-extension $L/K$ and then $L$ has no non-trivial $\calP$-extensions. Any field without non-trivial $\calP$-extensions will be called {\em $\calP$-closed}. We warn the reader that a finite extension of a $\calP$-closed field does not have to be $\calP$-closed.

\subsubsection{Split towers}
We say that a tower of algebraic field extensions $L/K/k$ is {\em split} if there exists an extension $l/k$ such that $L=l\otimes_kK$. In other words, $l$ is linearly disjoint from $K$ over $k$ and the composite $lK$ coincides with $L$.

\subsection{Basic ramification theory: the henselian case}\label{henssec}
Basic ramification theory for henselian valued fields is classical, so we only recall some definitions and basic facts without proofs. It is not so easy to find a good reference, but the reader can consult \cite[Section~6.2]{Gabber-Ramero} or a book in preparation \cite{Kuhl} by F.-V. Kuhlmann.

\subsubsection{Valued fields}
We will use multiplicative notation in the theory of valued fields. In particular, by a {\em valued field} we mean a field $k$ provided with a non-archimedean valuation $|\ |\:k\to\{0\}\coprod\Gamma$. The ring of integers and the residue field will be denoted $\kcirc$ and $\tilk$, respectively.

\subsubsection{Henselian valued fields}
A valued field $k$ is called {\em henselian} if the local ring $\kcirc$ is henselian. This happens if and only if the valuation of $k$ extends uniquely to any finite extension $l/k$. Throughout Section~\ref{henssec}, $k$ is a henselian valued field and $p=\expchar(\tilk)$ is the residual characteristic exponent of $k$.

\subsubsection{Unramified extensions}
By $k^u$ we denote the maximal unramified extension of $k$, i.e. $(k^u)^\circ$ is the strict henselization of $\kcirc$. Note that $k^u/k$ is a Galois extension and $G_{k^u/k}=G_{(\tilk)^s/\tilk}$.

\subsubsection{Totally ramified extensions}
An algebraic extension $l/k$ is called {\em totally ramified} if it is linearly disjoint from $k^u$. This happens if and only if $\till/\tilk$ is purely inseparable (e.g. $\tilk=\till$).

\subsubsection{Tame extensions}
Any algebraic extension $l/k$ splits uniquely into a tower $l/m/k$ such that $l/m$ is totally ramified and $m/k$ is unramified. If $l/m$ is a $p'$-extension then $l/k$ is called {\em tame}. A tame extension $l/k$ is controlled pretty well by the separable residue field extension $\till/\tilk$ and the $p'$-torsion group $G=|l^\times|/|k^\times|$. In particular, $[l:k]=[\till:\tilk]\cdot|G|$.

\subsubsection{The tame closure}
There is a unique maximal tame extension $k^t/k$ and the wild inertia group $W_k=G_{k^s/k^t}$ is a pro-$p$-group, which is the only Sylow pro-$p$-subgroup of the inertia group $I_k=G_{k^s/k^u}$. In particular, $k^t/k^u$ is Galois, and Kummer's theory implies that $G_{k^t/k^u}=\Hom(|(k^t)^\times|/|k^\times|,\mu_{k^u})$. Note also that $|(k^t)^\times|$ is the $p'$-divisible envelope of $|k^\times|$, i.e. it is obtained from $|k^\times|$ by extracting all roots of prime-to-$p$ orders.

\subsubsection{Purely wild extensions}
An algebraic extension $l/k$ is called purely wild if it is linearly disjoint from $k^t$. The following conditions are equivalent: (1) $l/k$ is purely wild, (2) $l/k$ is a totally ramified $p$-extension, (3) $\till/\tilk$ is purely inseparable and $|l^\times|/|k^\times|$ is a $p$-group.

\subsection{Basic ramification theory: the general case}\label{nonhenssec}
Now, let us discuss the case when $k$ is an arbitrary valued field. The results we recall in this section are still well known, but it is harder to find a single reference to this material and some terminology is not standardized. In particular, our notions of (strictly) unramified extensions and henselian extensions are not standard.

\subsubsection{Henselization}
Given a valued field $k$, by $k^h$ we denote its {\em henselization}, i.e. the valued field whose ring of integers is the henselization of $\kcirc$.

\begin{lem}\label{henslem}
Assume that $k$ is a valued field, $l/k$ is a finite field extension, and $l_1\..l_n$ is the list of all non-isomorphic valued extensions of $k$ whose underlying field is $l$. Then $l\otimes_kk^h=\prod_{i=1}^nl_i^h$.
\end{lem}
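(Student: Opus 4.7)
My plan is to realize both sides as finite products of finite field extensions of $k^h$, construct a natural comparison map, and use the universal property of henselization to prove it is an isomorphism.

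First, I would decompose $l \otimes_k k^h$. The extension $k^h/k$ is separable algebraic, since $k^{h,\circ}$ is a filtered colimit of \'etale local $k^\circ$-algebras; hence $l \otimes_k k^h$ is reduced (even when $l/k$ is inseparable), and being a finite-dimensional reduced $k^h$-algebra, it decomposes as
\[
l \otimes_k k^h \;=\; \prod_{j=1}^{m} L_j,
\]
a finite product of finite field extensions of $k^h$. Each $L_j$ is finite over the henselian valued field $k^h$, hence admits a unique valuation extending that of $k^h$ and is itself henselian.

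Second, I would construct a comparison map. For each valued extension $l_i/k$ with underlying field $l$, the structural map $k \to l_i^h$ factors uniquely through $k^h$ by the universal property of henselization, while $l$ maps naturally into $l_i^h$; together these give a $k^h$-algebra map $l \otimes_k k^h \to l_i^h$. Assembling over $i$ yields
\[
\phi \colon l \otimes_k k^h \longrightarrow \prod_{i=1}^{n} l_i^h.
\]
Since each $l_i^h$ is a field and the source is a product of fields, each component $\phi_i$ factors through precisely one projection, producing an injection $\iota_i \colon L_{j(i)} \hookrightarrow l_i^h$. Uniqueness of the valuation on $L_{j(i)}/k^h$ forces $\iota_i$ to preserve valuations, so restricting to $l$ recovers $v_i$; distinct $v_i$ therefore yield distinct indices $j(i)$, and surjectivity is automatic because any factor $L_j$ induces some extension of $|\cdot|_k$ on $l$, which must be isomorphic to some $v_i$.

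Finally, I would show each $\iota_i$ is an isomorphism: since $L_{j(i)}$ is a henselian valued field containing $l_i$, the universal property of $l_i^h$ yields a unique $l_i$-algebra map $g_i \colon l_i^h \to L_{j(i)}$, and applying the same universal property to both the $l_i$-algebra endomorphism $\iota_i \circ g_i$ and the identity of $l_i^h$ forces them to agree; thus $\iota_i$ is surjective and hence bijective. The main care-point is the possibly inseparable case: reducedness of $l \otimes_k k^h$ relies essentially on separability of the opposite factor $k^h/k$. Once this is in place, the bookkeeping of the bijection $i \leftrightarrow j(i)$ reduces cleanly to the fact that any map from a product of fields into a field factors through a unique projection.
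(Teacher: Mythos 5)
Your proof takes a genuinely different route from the paper's. The paper passes through the integral closure $\Nor_l(k^\circ)$: by Bourbaki, this is a semilocal Pr\"ufer ring whose localizations at the maximal ideals are exactly the $l_i^\circ$, and one then quotes EGA $\mathrm{IV}_4$, 18.6.8, to identify $\Nor_l(k^\circ)\otimes_{k^\circ}(k^\circ)^h$ with $\prod_i (l_i^\circ)^h$; tensoring with $k$ finishes. You instead work purely at the level of fields, decompose $l\otimes_k k^h$ into a product of henselian valued fields using separability of $k^h/k$, and then compare with $\prod_i l_i^h$ factor by factor via the universal property of henselization. Your route avoids the EGA citation and is more self-contained, at the cost of more bookkeeping.

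That said, the surjectivity step has a real gap. You establish that $i\mapsto j(i)$ is injective and that each $\iota_i\colon L_{j(i)}\to l_i^h$ is an isomorphism, but the sentence ``surjectivity is automatic because any factor $L_j$ induces some extension $v_i$ of $v_k$ to $l$'' does not by itself give $j=j(i)$: a priori two distinct factors $L_j$ and $L_{j(i)}$ could restrict to the same valuation $v_i$ on $l$, in which case $\phi$ would annihilate the superfluous factor and fail to be injective. To close the gap, run the universal-property argument once more in the other direction: for any index $j$, let $v_i$ be the valuation on $l$ obtained from the unique valuation on $L_j$ over $k^h$; the resulting map $l_i^h\to L_j$ is a $k^h$-algebra homomorphism (uniqueness of the structure map $k^h\to L_j$), surjective because $L_j$ is generated over $k^h$ by the image of $l$, and injective being a ring map out of a field, hence an isomorphism. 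Composing with $\iota_i^{-1}$ gives an isomorphism $L_{j(i)}\cong L_j$ of $l\otimes_k k^h$-algebras, which forces $j=j(i)$ since distinct factors of a product of fields admit no ring homomorphism between them over that product. With this addition your argument is complete.
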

\begin{proof}
Let $\lcirc=\Nor_l(\kcirc)$ be the integral closure of $\kcirc$ in $l$. By \cite[Ch.VI, \S8, n.6, Proposition~6]{BouComAlg}, $\lcirc$ is a semilocal ring whose localizations at the maximal ideals are the rings $l_i^\circ$. By \cite[$\rm {IV}_4$, Proposition~18.6.8]{ega} the ring $(\lcirc)^h:=\prod_{i=1}^n(\lcirc_i)^h$ is canonically isomorphic to $\lcirc\otimes_{\kcirc}(\kcirc)^h$. The lemma follows by tensoring with $k$.
\end{proof}

\subsubsection{Extensions of valuation rings}
We will also need the following simple result.

\begin{lem}\label{extlem}
Assume that $k$ is a valued field, $l/k$ is a finite field extension, and $A$ is an integrally closed $\kcirc$-subalgebra of $l$. Then all localizations of $A$ are valuation rings (i.e. $A$ is a Pr\"ufer ring).
\end{lem}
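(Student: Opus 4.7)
The plan is to sandwich $A$ between the integral closure $V$ of $k^\circ$ in $K$ and the field $K$ itself, where $K$ denotes the fraction field of $A$, and then to deduce the Pr\"ufer property of $A$ from the classical observation that any overring of a valuation ring inside its fraction field is again a valuation ring.

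First, since $K\subseteq l$ is finite over $k$, a routine clearing-denominators argument shows that $V$ has fraction field $K$: any $x\in K$ is algebraic over $k$, and multiplying the equation it satisfies by a suitable power of a common denominator $d\in k^\circ$ (which exists since $k^\circ$ is a valuation ring) shows that $dx$ is integral over $k^\circ$, so $x=(dx)/d$ lies in $\mathrm{Frac}(V)$. By \cite[Ch.VI, \S8, n.6, Proposition~6]{BouComAlg} (the same reference already invoked in Lemma~\ref{henslem}), $V$ is a semilocal ring whose localizations at maximal ideals are valuation rings of $K$ extending $k^\circ$. Since any further localization of a valuation ring is again a valuation ring, $V_\mathfrak{q}$ is a valuation ring of $K$ for every $\mathfrak{q}\in\Spec V$; in other words, $V$ is Pr\"ufer.

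Now the integrally closed hypothesis on $A$ gives $V\subseteq A\subseteq K$: every element of $V$ lies in $K=\mathrm{Frac}(A)$ and is integral over $k^\circ\subseteq A$, hence integral over $A$, hence in $A$. Fix a prime $\mathfrak{p}$ of $A$ and set $\mathfrak{q}:=\mathfrak{p}\cap V$. Then $V_\mathfrak{q}\subseteq A_\mathfrak{p}\subseteq K$, and since for each $x\in K$ one of $x,x^{-1}$ lies in the valuation ring $V_\mathfrak{q}$ and therefore in $A_\mathfrak{p}$, the overring $A_\mathfrak{p}$ is itself a valuation ring of $K$.

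The proof is essentially mechanical once the containment $V\subseteq A$ is in place, so there is no serious obstacle. The only points worth verifying carefully are that ``integrally closed'' is interpreted as integrally closed in $\mathrm{Frac}(A)$ (so that $V\subseteq A$ is automatic), and that $V$ really has fraction field $K$ rather than some proper subfield, which is the clearing-denominators step above.
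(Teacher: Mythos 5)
Your proof is correct and is essentially the same as the paper's: both reduce to the fraction field of $A$, invoke \cite[Ch.VI, \S8, n.6, Proposition~6]{BouComAlg} to see that the integral closure of $k^\circ$ there is Pr\"ufer, and then conclude via the standard fact that any overring of a valuation ring inside its fraction field is again a valuation ring. The only cosmetic difference is that the paper shrinks $l$ to $\Frac(A)$ at the outset, while you keep $l$ fixed and work with $K=\Frac(A)$ directly.
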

\begin{proof}
Decreasing $l$ we can assume that $l=\Frac(A)$. Any localization $A_q$ contains a localization $C$ of $\Nor_l(\kcirc)$. It remains to note that $C$ is a valuation ring of $l$ by \cite[Ch.VI, \S8, n.6, Proposition~6]{BouComAlg}, and hence any $C$-subalgebra of $l$ is a valuation ring.
\end{proof}

\subsubsection{Basic ramification tower}
Setting $k^u=(k^h)^u$ and $k^t=(k^h)^t$ we obtain a {\em basic ramification tower} $k^s/k^t/k^u/k^h/k$ of $k$. Note that the bottom level $k^h/k$ is typically not normal unless $k$ is henselian. Many definitions and proofs are done by passing to henselizations.

\begin{rem}
The subgroup $D\subseteq G_{k^s/k}$ corresponding to $k^h$ is called the decomposition group of the valuation of $k^s$. Its conjugates correspond to other extensions of the valuation of $k$ to $k^s$.
\end{rem}

\subsubsection{Tame and unramified extensions}
Let $l/k$ be an algebraic extension of valued fields. We say that $l/k$ is {\em tame} (resp. {\em unramified}) if $l^h/k^h$ is so. In particular, $k^u$ (resp. $k^t$) is, indeed, the maximal unramified (resp. tame) extension of $k$. Note that $(k^u)^\circ$ is the strict henselization $(\kcirc)^\sh$ of $\kcirc$. Also, we say that $l/k$ is {\em strictly unramified} if it is unramified and $\tilk=\till$. In particular, $k^h/k$ is the maximal strictly unramified extension.

Recall that a local $\kcirc$-algebra $A$ is called {\em essentially \'etale} (resp. {\em strictly essentially \'etale}) if $A$ is a localization of an \'etale $\kcirc$-algebra (resp. and $\tilk$ is the residue field of $A$), see \cite[$\rm {IV}_4$, 18.6.1, 18.6.2]{ega}.

\begin{lem}\label{unramlem}
Consider the following conditions on a finite extension $l/k$ of valued fields:
\begin{itemize}
\item[(a)] $l/k$ is unramified (resp. strictly unramified),
\item[(b)]  $\lcirc/\kcirc$ is essentially \'etale (resp. strictly essentially \'etale),
\item[(c)] $\lcirc/\kcirc$ is \'etale (resp. strictly \'etale).
\end{itemize}
Then (a)$\Longleftrightarrow$(b)$\Longleftarrow$(c), and all three conditions are equivalent whenever the height of $k$ is finite.
\end{lem}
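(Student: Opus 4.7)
The plan has three parts, matching the three implications. The trivial implication (c)$\Rightarrow$(b) is immediate: \'etaleness is a special case of essential \'etaleness via the trivial localization, and analogously for the ``strictly'' variants (the residue field condition is the same in (b) and (c)).

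For the equivalence (a)$\Leftrightarrow$(b), I would pass to the henselization. In the henselian case the classical theory (Section~\ref{henssec}) identifies an unramified finite extension $l/k$ with the condition that $\lcirc/\kcirc$ is \'etale (equivalently essentially \'etale, since both notions coincide on a henselian local base for finite local algebras); the strict variant adds the tautological residue-field identity. For general $k$, by definition $l/k$ is unramified iff $l^h/k^h$ is unramified, iff $(l^h)^\circ=(\lcirc)^h$ is \'etale over $(\kcirc)^h$. On the other hand, by \cite[$\rm IV_4$, 18.6.8]{ega} essential \'etaleness descends along $\kcirc\to(\kcirc)^h$: a local $\kcirc$-algebra $A$ is essentially \'etale iff $A\otimes_{\kcirc}(\kcirc)^h$ is essentially \'etale over $(\kcirc)^h$. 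Applying this with $A=\lcirc$ and using Lemma~\ref{henslem} to identify the tensor product with the appropriate product of henselizations $\prod_i(l_i^\circ)^h$ of the valuation rings of the various extensions of $|\cdot|_k$ to $l$, one matches ``$\lcirc$ essentially \'etale'' with ``each $(l_i^\circ)^h$ \'etale over $(\kcirc)^h$'', in particular with the chosen factor $(\lcirc)^h$ being \'etale over $(\kcirc)^h$. The strict version is obtained by tracking the equality $\till=\tilk$ through the same argument.

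For (b)$\Rightarrow$(c) under the finite-height assumption, the point is that $\lcirc$ is then a finitely generated $\kcirc$-module. I would argue this by induction on the height of $k$: the rank-one case follows from a Krull--Akizuki type statement applied to the (one-dimensional) semilocal normalization $\Nor_l(\kcirc)$ of Lemma~\ref{extlem}, of which $\lcirc$ is a localization at a maximal ideal; the inductive step uses a maximal chain of primes together with the already-known horizontal case. Once $\lcirc/\kcirc$ is both finite and essentially \'etale, write $\lcirc=B_{\mathfrak p}$ for an \'etale $\kcirc$-algebra $B$; then $B$ itself is finite over $\kcirc$, so it decomposes as a finite product of local \'etale $\kcirc$-algebras at its maximal ideals, and the factor at $\mathfrak p$ is exactly $B_{\mathfrak p}=\lcirc$, which is therefore \'etale.

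The main obstacle I anticipate is the book-keeping in the middle step: the tensor product $\lcirc\otimes_{\kcirc}(\kcirc)^h$ is in general \emph{not} the single henselization $(\lcirc)^h$, because the valuation of $k$ may have several inequivalent extensions to $l$; one must use the semilocal ring $\Nor_l(\kcirc)$ of Lemma~\ref{henslem} and check essential \'etaleness factor-by-factor. Once this identification is set up cleanly, the equivalence (a)$\Leftrightarrow$(b) becomes a formal consequence of the EGA invariance of essential \'etaleness under henselization and the henselian ramification dictionary of Section~\ref{henssec}; the remaining implication (b)$\Rightarrow$(c) is then a finiteness issue handled as above.
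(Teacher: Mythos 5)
Your proposal has two genuine gaps, and the paper's actual argument is structured quite differently.

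For (a)$\Rightarrow$(b), you propose to pass to henselizations and then ``descend'' essential \'etaleness along $\kcirc\to(\kcirc)^h$, citing EGA $\rm IV_4$, 18.6.8. That reference does not say this: 18.6.8 concerns henselization of integral (in practice finite) extensions, and the paper invokes it only in Lemma~\ref{henslem}, where $\lcirc$ there denotes the \emph{full normalization} $\Nor_l(\kcirc)$, not the single local valuation ring. Descent of ``essentially \'etale'' along henselization is not a citable theorem; the delicate point is the finite-presentation part of the definition, since $\lcirc$ need not a priori be essentially of finite type over $\kcirc$. This is precisely where the paper's proof does its real work: it uses $\lcirc\subseteq(\kcirc)^{\rm sh}$ to find a $\kcirc$-\'etale $\oA$ with $\oll^\circ=\oA_\om$ for a finite extension $\oll/l$, sets $A=\oA\cap l$, checks that $\oA$ is $A$-flat (using Lemma~\ref{extlem}), deduces from Raynaud--Gruson \cite[Cor.~3.4.7]{RG} that $\oA$ is $A$-finitely presented, and then descends finite presentation to $A/\kcirc$ via \cite[$\rm IV_4$, 17.7.5(iv), 17.7.7]{ega}. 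Your plan hides exactly this step behind the phrase ``a formal consequence of EGA invariance.''

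For (b)$\Rightarrow$(c), the Krull--Akizuki route is both off-target and insufficient. Krull--Akizuki controls the ring structure of the integral closure but does not give module-finiteness (indeed $\Nor_l(\kcirc)$ over a valuation ring is typically not a finite module without extra hypotheses), and in any case finiteness of $\lcirc$ over $\kcirc$ does \emph{not} imply that an \'etale $B$ with $\lcirc=B_\mathfrak{p}$ is itself finite over $\kcirc$ --- that final inference in your last sentence is a non sequitur. The paper's argument is much simpler and avoids module finiteness entirely: if the height is finite then $\Spec(\kcirc)$ is finite, $B$ is quasi-finite over $\kcirc$ so $\Spec(B)$ is finite, and on a finite spectrum the localization $B\to B_\mathfrak{p}$ is realized by inverting a single element, hence $\lcirc=B_f$ is \'etale. (Finiteness of $\lcirc$ as a $\kcirc$-module is then a \emph{consequence}, since an integral extension of finite type is finite, not an input.) The implications (c)$\Rightarrow$(b) and (b)$\Rightarrow$(a) are fine; for the latter the paper simply notes $(\kcirc)^{\rm sh}=(\lcirc)^{\rm sh}$, so $k^u=l^u$.
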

\begin{proof}
It suffices to establish the implications in the non-strict case. Obviously, (c)$\implies$(b). Assume that $\lcirc/\kcirc$ is essentially \'etale. Then $(\kcirc)^\sh=(\lcirc)^\sh$, and so $k^u=l^u$. Thus, (b)$\implies$(a). In addition, $\lcirc=A_m$ for an \'etale $\kcirc$-algebra $A$ with a maximal ideal $m$. If the height $h$ of $k$ is finite then $\Spec(\kcirc)$ is a finite set, and hence the set $\Spec(A)$ is also finite. It follows that the localization $A\to A_m$ can be achieved by inverting a single element. In particular, $\lcirc/\kcirc$ is \'etale, and we obtain that (b)$\implies$(c) whenever $h<\infty$.

It remains to prove that (a)$\implies$(b). If $l/k$ is unramified then $\lcirc\subseteq(\kcirc)^\sh$ and hence there exists a finite extension of valued fields $\oll/l$ such that $\oll^\circ/\kcirc$ is essentially \'etale, say, $\oll^\circ=\oA_\om$ for a $\kcirc$-\'etale algebra $\oA$ with a maximal ideal $\om$. Being $\kcirc$-\'etale, the ring $\oA$ is integrally closed. Hence $A=\oA\cap l$ is integrally closed too, and localizations of $A$ are valuation rings by Lemma~\ref{extlem}. Taking into account that $\oA$ is a domain, we obtain that $\oA$ is $A$-flat. Since $\oA$ is finitely generated over $\kcirc$, it is finitely generated over $A$ and hence finitely presented over $A$ by \cite[Corollaire~3.4.7]{RG}. Applying \cite[$\rm IV_4$, Lemme~17.7.5(iv)]{ega} to $\Spec(\oA)\to\Spec(A)\to\Spec(\kcirc)$, we obtain that $A$ is finitely presented over $\kcirc$, and hence $A$ is $\kcirc$-\'etale by \cite[$\rm IV_4$, Proposition~17.7.7]{ega}. It remains to notice that $\lcirc$ is the localization of $A$ along $m=\om\cap A$.
\end{proof}

\subsubsection{Tame fields}
A valued field $k$ is called {\em tame} if $k^a=k^t$. Clearly, this happens if and only if any algebraic extension $l/k$ is tame. Purely inseparable extensions are purely wild, hence any tame field is perfect. We also say that $k$ is {\em separably tame} if $k^s=k^t$.

\subsubsection{Henselian extensions}
We say that an algebraic extension of valued fields $l/k$ is {\em henselian} if the valuation of $k$ extends to the valuation of $l$ uniquely. For example, $k$ is henselian if and only if any algebraic extension $l/k$ is henselian.

\begin{lem}\label{hensextlem}
An algebraic extension $l/k$ is henselian if and only if $l$ and $k^h$ are linearly disjoint, and in this case $l\otimes_kk^h=l^h$.
\end{lem}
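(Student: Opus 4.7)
The plan is to reduce to the case of a finite extension and then quote Lemma~\ref{henslem} directly. Both conditions in the statement are of finite character. Indeed, uniqueness of the extension of the valuation of $k$ to $l$ is equivalent to uniqueness of the extension to each finite subextension $l'\subseteq l$: a valuation on $l$ is determined by its restrictions to all $l'$, and if two distinct extensions to $l$ exist they must already differ on some finite subextension. Similarly, $l$ and $k^h$ are linearly disjoint over $k$ (inside a common algebraic closure of $k$) iff $l'$ and $k^h$ are linearly disjoint for every finite $l'\subseteq l$. Thus once the equivalence is proved for finite $l/k$, the general case follows by passing to the filtered union. The identity $l\otimes_kk^h=l^h$ in the infinite case then follows by taking a colimit over finite subextensions, since $l^h=\bigcup_{l'}(l')^h$ and tensor product commutes with colimits.

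Now suppose $l/k$ is finite, and let $l_1\..l_n$ be the list of non-isomorphic valued extensions of $k$ whose underlying field is $l$. By Lemma~\ref{henslem},
\[
l\otimes_kk^h\ =\ \prod_{i=1}^nl_i^h.
\]
By definition $l/k$ is henselian iff the valuation of $k$ extends uniquely to $l$, i.e.\ iff $n=1$. On the other hand, since $l/k$ and $k^h/k$ are algebraic, the tensor product $l\otimes_kk^h$ is an \'etale $k$-algebra, so the conditions ``$l\otimes_kk^h$ is a field'', ``$l\otimes_kk^h$ is a domain'', and ``$l$ and $k^h$ are linearly disjoint over $k$'' are all equivalent; by the displayed formula each of them is equivalent to $n=1$. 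This proves the desired equivalence, and in the henselian case the formula collapses to $l\otimes_kk^h=l_1^h=l^h$, where $l_1=l$ carries its unique valuation extending that of $k$.

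The proof is essentially bookkeeping once Lemma~\ref{henslem} is in hand; the only mildly delicate point is the reduction of the infinite case to the finite case, and in particular verifying that passage to filtered unions is compatible with the formation of henselizations of valued fields. This compatibility is standard (henselization of a valuation ring commutes with filtered colimits of local rings with local transition maps), so there is no substantive obstacle.
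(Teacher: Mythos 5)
Your proof follows the same route as the paper's: reduce to the finite case by compatibility of henselization with filtered colimits, then quote Lemma~\ref{henslem} to identify $l\otimes_k k^h$ with $\prod_i l_i^h$ and read off the equivalence. One small slip worth noting: $l\otimes_k k^h$ need not be \'etale over $k$ when $l/k$ has an inseparable part (only $k^h/k$ is automatically separable), but this does no harm because the chain ``field $\Leftrightarrow$ domain $\Leftrightarrow$ linearly disjoint'' already holds for any finite $k$-algebra mapping to a field, without any smoothness hypothesis.
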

\begin{proof}
If $l/k$ is finite then this follows by applying Lemma~\ref{henslem} to $l/k$. The general case follows since henselizations are compatible with filtered colimits by \cite[$\rm {IV}_4$, Proposition~18.6.14(ii)]{ega}.
\end{proof}

\subsubsection{Totally ramified extensions}
As in the henselian case, $l/k$ is called {\em totally ramified} (resp. {\em purely wild}) if it is linearly disjoint from $k^u/k$ (resp. $k^t/k$). This happens if and only if $l/k$ is henselian and the extension $l^h/k^h$ is totally ramified (resp. purely wild). In particular, $l/k$ is purely wild if and only if it is a totally ramified $p$-extension.

\subsection{Splitting the basic ramification tower and Pank's theorem}\label{panksec}

\subsubsection{Non-canonical maximal extensions}
We have defined six basic properties of extensions of valued fields that form three ``disjoint pairs": strictly unramified versus henselian, unramified versus totally ramified, and tame versus purely wild.

\begin{lem}
Let $R$ be one of the following properties: (a) strictly unramified, (b) unramified, (c) tame, (d) henselian, (e) totally ramified, (f) purely wild.

(i) Let $m/l/k$ be a tower of algebraic extensions of valued fields. Then $m/k$ satisfies $R$ if and only if both $m/l$ and $l/k$ satisfy $R$.

(ii) A filtered union of $R$-extensions of $k$ is an $R$-extension.

(iii) Let $S$ be any set of the properties (a)--(f). Then any valued field $k$ possesses a maximal $S$-extension $k_S/k$.
\end{lem}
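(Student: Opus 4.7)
My plan is to treat the three parts separately; part (i) is the main content, while (ii) and (iii) follow quickly from it.

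For (i), the ``only if'' direction is immediate for each of the six properties: in each case, if $m/k$ satisfies $R$ then so do $l/k$ and $m/l$. Concretely, for (a)--(c) the definition via henselizations makes it clear that $R$ passes to sub- and quotient-extensions once it holds for the tower as a whole; for (d) uniqueness of the extension of the valuation of $k$ to $m$ restricts to uniqueness of the extension to $l$, and forces uniqueness of the extension from the induced valuation of $l$ to $m$; and for (e),(f) linear disjointness from $k^u$ or $k^t$ is inherited by $l/k$ and, after base change, by $m/l$. For the nontrivial direction I reduce to the henselian case. Properties (a)--(c) are detected on the tower $m^h/l^h/k^h$ after compatible choice of valuations, so their transitivity reduces to the classical henselian statements recalled in Section~\ref{henssec} (for (a) one also notes that the residue extension factors as $\tilde m/\till\cdot\till/\tilk$). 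For (d), I invoke Lemma~\ref{hensextlem} and compute
\[
m\otimes_k k^h \;=\; m\otimes_l(l\otimes_k k^h)\;=\; m\otimes_l l^h\;=\; m^h,
\]
which shows $m/k$ is henselian. For (e) and (f) the same tower computation applied to $k^u$ (resp.\ $k^t$) in place of $k^h$, combined with the inclusion $l\cdot k^u\subseteq l^u$ (resp.\ $l\cdot k^t\subseteq l^t$) obtained after compatible choice of extensions of the valuation, shows that $m\otimes_k k^u$ (resp.\ $m\otimes_k k^t$) is a field, which is exactly the required linear disjointness.

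For (ii), the key observation is that each of the six properties is of \emph{finite character} on the extension: an algebraic extension satisfies $R$ if and only if every finite subextension does. For (a)--(c) this follows from the compatibility of henselization with filtered colimits, \cite[$\rm IV_4$, Proposition~18.6.14(ii)]{ega}, combined with the finite henselian case in Section~\ref{henssec}; for (d), any two distinct extensions of the valuation would already disagree on some single element, hence on some $l_i$; for (e),(f) linear disjointness with $k^u$ or $k^t$ over $k$ is tested on finite subextensions by definition. Given $l=\bigcup l_i$ with each $l_i/k$ in $R$, any finite subextension of $l/k$ lies in some $l_i$ and hence is in $R$, yielding (ii).

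Part (iii) is a routine Zorn's lemma argument inside $k^a/k$: the intersection over $S$ of the classes of $R$-extensions is again closed under filtered unions by (ii), so every chain of $S$-subextensions of $k^a$ has an upper bound, producing a maximal element $k_S$.

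The main obstacle I anticipate lies in (i) for properties (e) and (f): the definition is phrased as linear disjointness from a specific auxiliary extension of $k$ rather than intrinsically on the tower $m/l/k$, so one must keep the valuations of $l$, $l^u$, $l^t$ compatible with those of $k$, $k^u$, $k^t$ in order to secure the inclusions $l\cdot k^u\subseteq l^u$ and $l\cdot k^t\subseteq l^t$. Once this bookkeeping is in place, the tensor-product tower closes and everything else is essentially formal.
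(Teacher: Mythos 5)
Your argument is correct and follows essentially the same route as the paper's proof: for (d)--(f) you use linear disjointness from $k^h$, $k^u$, $k^t$ together with the identities $l^h=lk^h$, $l^u=lk^u$, $l^t=lk^t$ to close the tensor-product tower, for (ii) you observe that linear disjointness is a finite-character property, and for (iii) you invoke Zorn. The only cosmetic difference is in (a)--(c): the paper dispatches these in one line by noting that $k^h$, $k^u$, $k^t$ are the unique maximal $R$-extensions (so $l/k$ satisfies $R$ iff $l$ sits inside the corresponding canonical field), whereas you reduce to the henselian case and appeal to finite character; both are fine, but the paper's phrasing makes (i) and (ii) for (a)--(c) immediate without further discussion of residue extensions.
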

\begin{proof}
The first three properties are included for completeness. Claims (i) and (ii) hold for them because there even exists a unique maximal $R$-extension $k^h$, $k^u$ or $k^t$, respectively. The last three properties are equivalent to being linearly disjoint from $k^h$, $k^u$ and $k^t$, respectively. This observation and the fact that $l^h=lk^h$, $l^u=lk^u$ and $l^t=lk^t$ imply the assertions (i) and (ii) for (d), (e) and (f). The assertion (iii) follows from (i) and (ii) by Zorn's lemma.
\end{proof}

We will use lower indices to denote non-canonical maximal extensions, as opposed to $k^t$, $k^u$, etc. For example, $k_\tw$ will denote a maximal purely wild extension of $k$.

\subsubsection{Splitting the tower $k^t/k^u/k^h/k$}
Non-canonical maximal extensions can be used to split various levels in the basic ramification tower. We start with a simple case that will not be used in the sequel but illustrates the situation well.

\begin{lem}\label{tamesplit}
Assume that $k$ is a valued field. Let $k_\hu/k$ be a maximal henselian unramified extension and let $k_{\rt}/k$ be a maximal tame totally ramified extension. Then $k^h\otimes_kk_\hu=k^u$ and $k^u\otimes_kk_\rt=k^t$.
\end{lem}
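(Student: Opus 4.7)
The plan is to prove each of the two equalities by first recognizing the left-hand side as a genuine composite field inside $\overline k$, and then identifying this composite with the target. For the first equality, Lemma~\ref{hensextlem} applied to the henselian extension $k_\hu/k$ gives $k_\hu\otimes_k k^h=k_\hu^h=k_\hu\cdot k^h$ inside $\overline k$. For the second, $k_\rt/k$ is totally ramified and hence by definition linearly disjoint from $k^u$, so $k^u\otimes_k k_\rt=k^u\cdot k_\rt$. It therefore remains to prove $k_\hu^h=k^u$ and $k^u\cdot k_\rt=k^t$ as subfields of $\overline k$.

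For the first remaining equality, the inclusion $k_\hu^h\subseteq k^u$ follows from $k_\hu/k$ being unramified: then $k_\hu^h/k^h$ is unramified, so $k_\hu^h\subseteq(k^h)^u=k^u$. For the reverse, my plan is to use maximality of $k_\hu$ to show $\tilde{k_\hu}=\tilk^s$; once this is in hand, $k_\hu^h$ is henselian over $k^h$ with separably closed residue field and value group $|k^\times|$, hence strictly henselian, so it contains the minimal strict henselization $k^u$ of $k^h$. To prove $\tilde{k_\hu}=\tilk^s$ I argue by contradiction: given a separable $\beta\in\tilk^s\setminus\tilde{k_\hu}$ with monic minimal polynomial $\tilde f\in\tilde{k_\hu}[x]$, lift to a monic $f\in k_\hu^\circ[x]$. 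Since $\tilde f$ is irreducible and $k_\hu^h$ is henselian with residue field $\tilde{k_\hu}$, $f$ is irreducible over $k_\hu^h$, hence over $k_\hu$, so $B:=k_\hu^\circ[x]/(f)$ is a domain étale over $k_\hu^\circ$. Incomparability for the finite extension $k_\hu^\circ\subseteq B$ shows that the unique prime of $B$ above the maximal ideal of $k_\hu^\circ$ is in fact its only maximal ideal, so $B$ is local. Then Lemma~\ref{unramlem} gives that $k_\hu(\alpha):=\Frac(B)$ is unramified over $k_\hu$, and Lemma~\ref{henslem} gives that it is henselian over $k_\hu$ (because $B\otimes_{k_\hu^\circ}k_\hu^{h\circ}$ remains local with residue field $\tilde{k_\hu}[x]/(\tilde f)$, so the valuation on $k_\hu$ has a unique extension). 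Transitivity of both properties then yields a henselian unramified extension of $k$ properly containing $k_\hu$, contradicting maximality.

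For the second remaining equality, $k^u\cdot k_\rt\subseteq k^t$ is immediate, since $k_\rt\subseteq k^t$ (by tameness) and $k^u\subseteq k^t$. For the converse, my plan uses the Kummer-theoretic description of $k^t/k^u$ recalled in Section~\ref{henssec}: $k^t$ is generated over $k^u$ by radicals $\sqrt[n]{u}$ with $u\in(k^u)^\times$ and $n$ coprime to $p$. Since $|(k^u)^\times|=|k^\times|$, any such $u$ factors as $c\cdot u_0$ with $c\in k^\times$ and $u_0\in((k^u)^\circ)^\times$; Hensel's lemma in the strictly henselian $k^u$ then provides $\sqrt[n]{u_0}\in k^u$. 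So it suffices to handle $\sqrt[n]{c}$ with $c\in k^\times$. Either $\sqrt[n]{c}\in k^u$ already, or $k(\sqrt[n]{c})/k$ is a non-trivial tame (prime-to-$p$ degree) extension that is totally ramified (its value group strictly contains $|k^\times|=|(k^u)^\times|$, whence linear disjointness from $k^u$), and hence $\sqrt[n]{c}\in k_\rt$ by maximality.

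The main obstacle I anticipate is the maximality argument in the first equality: producing, from the étale algebra $B$ over the possibly higher-rank valuation ring $k_\hu^\circ$, a field extension $k_\hu(\alpha)/k$ that is \emph{simultaneously} henselian and unramified. The delicate points are showing that $B$ is local in arbitrary rank (relying on incomparability together with the irreducibility of $f$ over $k_\hu^h$) and that $k_\hu(\alpha)/k_\hu$ is henselian (via tracking the decomposition in Lemma~\ref{henslem}), since without these the contradiction with maximality of $k_\hu$ would not go through.
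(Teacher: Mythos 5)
Your treatment of the first equality is sound and rests on the same key step as the paper's: one shows $\wt{k_\hu}=\tilk^s$ by lifting any nontrivial separable residue extension to a henselian unramified extension of $k_\hu$, contradicting maximality. You carry this out at a lower level, via explicit \'etale algebras together with Lemmas~\ref{henslem} and~\ref{unramlem}, while the paper simply compares residue fields of the henselian unramified extension $k^u/k^hk_\hu$; the content is the same.

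The second equality has a genuine gap. The dichotomy ``either $\sqrt[n]{c}\in k^u$, or $k(\sqrt[n]{c})/k$ is totally ramified since its value group strictly exceeds $|k^\times|$'' is false: growth of the value group does not force linear disjointness from $k^u$. For instance with $k=\bfQ_5$, $c=50=2\cdot 5^2$ and $n=4$, one has $|\sqrt[4]{50}|\notin|k^\times|$, so $\sqrt[4]{50}\notin k^u$; yet $\sqrt{2}=\sqrt[4]{50}^2/5$ lies in $k(\sqrt[4]{50})$, which therefore contains the nontrivial unramified extension $\bfQ_5(\sqrt 2)$ and is not totally ramified. Moreover, even when $k(\sqrt[n]{c})/k$ is tame totally ramified, maximality of $k_\rt$ does not give $\sqrt[n]{c}\in k_\rt$; maximality only forbids proper tame totally ramified extensions \emph{of} $k_\rt$, it does not make $k_\rt$ contain every such extension of $k$. (Over $\bfQ_3$, both $\bfQ_3(\sqrt 3)$ and $\bfQ_3(\sqrt{-3})$ are tame totally ramified, but their composite contains the unramified $\bfQ_3(\sqrt{-1})$, so no single $k_\rt$ contains both.) The remedy, and the paper's actual argument, is to compare invariants of $k^uk_\rt$ and $k^t$ directly: both have residue field $\tilk^s$ by the same lifting-maximality argument as in the first part, and $|(k^uk_\rt)^\times|=|k_\rt^\times|$; if this were a proper subgroup of the $p'$-divisible envelope $|(k^t)^\times|$, one could adjoin an $\ell$-th root ($\ell\ne p$) of a suitable element of $k_\rt$ and obtain a strictly larger tame totally ramified extension of $k$, contradicting maximality of $k_\rt$. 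Since a tame extension of henselian fields is determined by residue field and value group, equality of these invariants gives $k^uk_\rt=k^t$.
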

\begin{proof}
Both pairs are linearly disjoint by definition hence we should only check that the extensions $k^u/k^hk_\hu$ and $k^t/k^uk_\rt$ are trivial. The first one is an unramified extension of henselian fields, hence it suffices to compare the residue fields. Both $\tilk^u$ and $\wt{k^hk_\hu}$ are separable over $\tilk$. Also, both $\tilk^u$ and $\tilk_\hu$ are separably closed because otherwise we could lift their non-trivial separable extension to a non-trivial henselian unramified extension of $k^u$ or $k_\hu$ contradicting their maximality. Thus, $\tilk^u=(\tilk)^s=\wt{k^hk_\hu}$ and hence $k^u=k^hk_\hu$.

The extension $k^t/k^uk_\rt$ is a tame extension of henselian fields, hence it suffices to compare the residue fields and the groups of values. The same argument as above shows that the residue fields coincide with $(\tilk)^s$. We remarked earlier that $H=|(k^t)^\times|$ is the $p'$-divisible envelope of $|k^\times|$. So, it remains to check that the inclusion $|(k_{\rt})^\times|\subseteq H$ is an equality. Indeed, if the inclusion were strict we would be able to enlarge $k_{\rt}$ by adjoining an appropriate root of its element.
\end{proof}

\subsubsection{Splitting $k^a/k^t/k$}
Analogously to Lemma~\ref{tamesplit}, any maximal purely wild extension $k_w/k$ splits $k^a/k^t/k$. However, this is more difficult to prove because wild extensions are not controlled well enough by the residue fields and groups of values. For henselian fields, this splitting is a theorem of M. Pank, and we will deduce that it also holds for arbitrary valued fields of height one. We will not need the general case and it will be published elsewhere.

\subsubsection{Reformulations}\label{equivsec}
Let $\calV$ be a class of valued fields closed with respect to algebraic extensions. For example, the class of henselian valued fields or the class of valued field of a given height. We claim that conditions (0)--(4) below are equivalent, where $k_w$ denotes a maximal purely wild extension of $k$.

\begin{itemize}
\item[(0)] A splitting of $k^a/k^t/k$: if $k\in\calV$ then $k^t\otimes_kk_w=k^a$ for some choice of $k_w$.
\item[(1)] Splittings of $k^a/k^t/k$: if $k\in\calV$ then $k^t\otimes_kk_w=k^a$ for any choice of $k_w$.
\item[(2)] Tame distillation: if $k\in\calV$ then $k_w$ is tame.
\item[(3)] Tame fields: a valued field $k\in\calV$ is tame if and only $k=k_w$.
\item[(4)] Splitting of the Galois group: for $k\in\calV$ the homomorphism $G_{k^s/k}\to G_{k^t/k}$ admits a section, i.e. $G_{k^s/k}$ is a semidirect product of the wild inertia group $W=G_{k^s/k^t}$ and the tame Galois group $G_{k^t/k}$.
\end{itemize}

Indeed, (1) obviously implies (0), and we have the following simple implications.

(0)$\implies$(4) Let $l=k^{1/p^\infty}$ be the perfection of $k$. The isomorphism $G_{l^a/l}=G_{k^s/k}$ respects the inertia subgroups, hence (4) follows by applying (0) to $l$.

(4)$\implies$(3) If $k_w=k$ then $k$ is perfect, i.e. $k^a=k^s$. In addition, the fixed field of the image of a section $G_{k^t/k}\to G_{k^s/k}$ is purely wild over $k$. Hence $k^s=k^t$.

(3)$\implies$(2) Note that $(k_w)_w=k_w$ and apply (3) to $k_w$.

(2)$\implies$(1). We should check that the extension $k^a$ coincides with its subfield $l=k^tk_w$. Since $k^t\subseteq l$, the extension $k^a/l$ is purely wild. Since $k_w$ is tame the extension $k^a/l$ is tame. So, $k^a=l$.

\begin{rem}\label{sepsplitrem}
Using (4) one can show in the same fashion that these conditions are also equivalent to the following separable analogues, where $k_\sw/k$ is a maximal separable purely wild extension:
\begin{itemize}
\item[(1)] If $k\in\calV$ then $k^t\otimes_kk_\sw=k^s$.
\item[(2)] If $k\in\calV$ then $k_\sw$ is separably tame.
\item[(3)] A valued field $k\in\calV$ is separably tame if and only if $k=k_\sw$.
\end{itemize}
\end{rem}

\subsubsection{Pank's theorem}
The following theorem was proved by M. Pank.

\begin{theor}\label{pankth}
Let $k_{\tw}$ be a maximal purely wild extension of a henselian valued field $k$. Then $k^t\otimes_kk_{\tw}=k^a$.
\end{theor}

At least two proofs are available in the literature. The proof of \cite[Theorem~2.1]{KPR} establishes condition (4) by a group-theoretic argument that uses Galois theory and group cohomology: one uses a pro-finite version of Schur-Zassenhaus theorem and the fact that $p$-Sylow subgroups of $G_{\tilk}$ are pro-$p$-free.

Another proof was suggested by Ershov, see \cite[Theorem~2]{Er}, and it establishes reformulation (3) by a direct valuation-theoretic argument. Similarly to our study of an extension $k_{\rt}$, it is easy to see that $\wt{k_{w}}$ is the perfection of $\tilk$ and $|(k_{w})^\times|$ is the $p$-divisible envelope of $|k^\times|$. This is seen already by considering $p$-th roots of elements. Thus the whole point of the proof is to control extensions $l/k$ with a non-trivial {\em defect} $d_{l/k}=[l:k]/(e_{l/k}f_{l/k})$. In fact, Ershov proves that if $l/k$ is tame and $l$ has a non-trivial {\em immediate} algebraic extension $l'$ (i.e. $e_{l'/l}=f_{l'/l}=1$) then $k$ has a non-trivial immediate algebraic extension too.

\subsection{Splitting theorem for valued fields of height one}\label{htonesec}

\subsubsection{Decompletion}
If $k$ is of height one then $k$ is dense in $k^h$ because $\hatk$ is henselian and hence contains $k^h$ (e.g., see \cite[Theorem~17.18]{Endler}). This allows to split separable extensions of $k^h$.

\begin{lem}\label{decomplem}
If $k$ is a valued field of height one then any tower $L/k^h/k$ with a separable $L/k^h$ splits. In other words, there exists a henselian extension $l/k$ such that $L=l^h=l\otimes_kk^h$.
\end{lem}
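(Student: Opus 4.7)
The strategy is to first handle the case $[L:k^h]<\infty$ via a Krasner-and-approximation argument, and then bootstrap to the general separable case using Zorn's lemma. The height-one hypothesis enters only through the inclusion $k^h\subseteq\hat k$ noted just before the statement, which implies that $k$ is dense in $k^h$ (and more generally that any height-one henselian extension $L'/l'$ has $l'$ dense in $L'$).

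For the finite case, write $L=k^h(\alpha)$ by the primitive element theorem, and let $f\in k^h[T]$ be the minimal polynomial of $\alpha$, of degree $n=[L:k^h]$. Using density of $k$ in $k^h$, approximate $f$ coefficientwise by a monic $g\in k[T]$ of degree $n$; if $g$ is close enough to $f$, then continuity of roots over the henselian field $k^h$ produces a root $\beta$ of $g$ within Krasner's tolerance of $\alpha$, and Krasner's lemma combined with the degree inequality $[k^h(\beta):k^h]\le\deg g=n=[L:k^h]$ forces $k^h(\beta)=L$, and in particular $\beta\in L$. Set $l=k(\beta)\subseteq L$. Then the multiplication map $l\otimes_kk^h\to L$ is surjective with image $l\cdot k^h=L$, and the sandwich $[L:k^h]\le[l:k]\le\deg g=[L:k^h]$ forces it to be a $k^h$-algebra isomorphism. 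By Lemma~\ref{henslem} the source decomposes as a product of henselizations $\prod_i l_i^h$ over the various extensions of the valuation of $k$ to $l$; since $L$ is a field this product has a single factor, which means $l/k$ is henselian and $l^h=L$.

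For the general separable case, apply Zorn's lemma to the poset of pairs $(L',l')$ with $k^h\subseteq L'\subseteq L$, $k\subseteq l'\subseteq L'$, and $l'\otimes_kk^h\to L'$ a $k^h$-algebra isomorphism, nonempty since $(k^h,k)$ qualifies. Filtered unions provide upper bounds because tensor products commute with filtered colimits. A maximal element $(L_0,l_0)$ must satisfy $L_0=L$: otherwise, pick $\alpha\in L\setminus L_0$ and apply the finite-case construction to $L_0(\alpha)/L_0$, now with $l_0$ in place of $k$ and $L_0=l_0^h$ in place of $k^h$ (legitimate since $l_0$ is dense in the height-one henselization $L_0$). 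This produces $\beta\in L_0(\alpha)$ with $l_0(\beta)\otimes_{l_0}L_0=L_0(\alpha)$, and tensoring over $l_0$ with $k^h$ yields $l_0(\beta)\otimes_kk^h=L_0(\alpha)$, contradicting the maximality of $(L_0,l_0)$. Henselianness of $l=l_0$ over $k$ follows from Lemma~\ref{henslem} applied to each finite subextension $l'\subseteq l$: the injection $l'\otimes_kk^h\hookrightarrow l\otimes_kk^h=L$ shows the left hand side is a domain, so it has a single factor in the decomposition of Lemma~\ref{henslem}, meaning $l'/k$ is henselian.

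The main delicate point is the Krasner step: I need $g$ chosen close enough to $f$ to guarantee both a root $\beta$ within Krasner's radius of $\alpha$ and the exact degree match $\deg g=\deg f$. Both are arranged simultaneously by taking $g$ monic of degree $n$ with each coefficient sufficiently close to the corresponding coefficient of $f$; everything else is bookkeeping around Lemma~\ref{henslem} and the dimension count.
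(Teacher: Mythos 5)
Your proof is correct and follows essentially the same route as the paper: primitive element, coefficientwise approximation of the minimal polynomial by a monic $g\in k[T]$ using density of $k$ in $k^h$, continuity of roots plus Krasner's lemma for the finite case, and a Zorn's lemma argument for the general case. You spell out a few points (the dimension count showing $l\otimes_kk^h\to L$ is an isomorphism, and the appeal to Lemma~\ref{henslem} to verify henselianness of $l/k$) that the paper leaves implicit, but the underlying argument is the same.
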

\begin{proof}
Assume first that $L/k^h$ is finite. Since $L/k^h$ is separable, $L=k^h[t]/(f(t))$ for a monic polynomial $f(t)$ by the primitive element theorem. By Krasner's lemma (e.g., see \cite[Lemma~16.8]{Endler}), $L=k^h[t]/(g(t))$ for any monic polynomial $g(t)$ of the same degree whose root is close enough to a root of $f(t)$. Since roots of monic polynomials depend on the coefficients continuously (e.g., see \cite[Theorem~2]{Brink}), we can achieve that $g(t)\in k[t]$ and then $l=k[t]/(g(t))$ is as required.

Now, let us establish the general case. Note that given a totally ordered family $\{l_i\}_{i\in I}$ of valued extensions of $k$ provided with a compatible family of embeddings of valued fields $l_i^h\into L$, we also obtain an embedding $(\cup_{i\in I}l_i)^h\into L$. By Zorn's lemma, there exists a maximal totally ordered family of henselian extensions $l_i/k$ provided with compatible embeddings $l_i^h\into L$, and we denote its maximal element $\cup_{i\in I}l_i$ by $l$. We claim that the inclusion $l^h\into L$ is an equality, and so $l$ is as required. Indeed, if $l^h\subsetneq L$ then there exists a finite subextension $L'/l^h$ and by the case of finite extensions, $L'/l^h/l$ can be split by an extension $l'/l$. Then $L'=(l')^h$, and we obtain that the family $\{l_i\}$ can be enlarged by adjoining $l'$. The contradiction concludes the proof.
\end{proof}

\begin{cor}\label{decomplcor}
Assume that $l/k$ is an algebraic extension of valued fields and $k$ is of height one. Then $l/k$ is a maximal separable purely wild extension if and only if $l/k$ is henselian and $l^h/k^h$ is a maximal separable purely wild extension.
\end{cor}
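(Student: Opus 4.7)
The plan is to prove the two implications separately. The forward direction will rest on the decompletion result (Lemma~\ref{decomplem}), while the reverse direction will be a formal consequence of the linear disjointness characterization of henselian extensions (Lemma~\ref{hensextlem}).

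For the forward direction, suppose $l/k$ is a maximal separable purely wild extension. Since purely wildness means linear disjointness from $k^t$, and $k^h\subseteq k^t$, Lemma~\ref{hensextlem} already gives that $l/k$ is henselian with $l^h=l\otimes_k k^h$. The extension $l^h/k^h$ is purely wild by the definition of purely wildness in the general case, and it is separable because $l/k$, $k^h/k$, and $l^h/l$ are all separable. To establish its maximality, I would argue by contradiction: suppose $L/k^h$ is separable purely wild with $l^h\subsetneq L$. Since $l$ inherits height one from $k$ and $L/l^h$ is separable (as a subextension of $L/k^h$), Lemma~\ref{decomplem} applied to the tower $L/l^h/l$ produces a henselian extension $m/l$ with $L=m^h=m\otimes_l l^h$. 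Then $m\supsetneq l$, and one checks that $m/k$ is separable purely wild (henselian by transitivity, purely wild because $m^h/k^h=L/k^h$ is so, separable because $m\subseteq L$ and $L/k$ is a composition of separable extensions). This contradicts maximality of $l/k$.

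For the reverse direction, suppose $l/k$ is henselian and $l^h/k^h$ is maximal separable purely wild. Then $l/k$ is purely wild by definition, and separable because $l\subseteq l^h$ while $l^h/k$ is separable. To see maximality, suppose $m/k$ is separable purely wild with $l\subsetneq m$. Since both $m/k$ and $l/k$ are henselian, the unique extension of the valuation of $k$ to $l$ has a unique extension to $m$, so $m/l$ is henselian; Lemma~\ref{hensextlem} then gives $m^h=m\otimes_l l^h$. One verifies $m^h/k^h$ is separable purely wild (separability by the henselization argument again, pure wildness by definition), hence $m^h=l^h$ by maximality of $l^h/k^h$. Then $\dim_{l^h}(m\otimes_l l^h)=1$, forcing $[m:l]=1$ and hence $m=l$, a contradiction.

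The main obstacle is the forward direction, where one must upgrade an extension of $l^h$ to an extension of $l$; this is precisely what the decompletion Lemma~\ref{decomplem} provides, and it is the only step that uses the hypothesis that $k$ has height one. The reverse direction is bookkeeping with Lemma~\ref{hensextlem}, using that $m\otimes_l l^h$ collapses to $l^h$ to kill any nontrivial enlargement of $l$.
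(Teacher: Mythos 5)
Your proof is correct and takes essentially the same route as the paper's: both directions hinge on the observation that $l/k$ is purely wild iff it is henselian and $l^h/k^h$ is purely wild, and the key non-formal step is invoking the decompletion Lemma~\ref{decomplem} to descend a separable purely wild enlargement of $l^h$ to one of $l$, which is exactly where the height-one hypothesis enters. The paper compresses this into a single ``$l$ has a non-trivial separable purely wild extension iff $l^h$ does'' statement, whereas you unfold the two implications explicitly; the content is the same, and your reverse-direction bookkeeping via $m\otimes_l l^h=m^h$ matches the paper's ``it remains to note'' clause.
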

\begin{proof}
Let $R$ abbreviate ``non-trivial separable purely wild". We should prove that $l$ possesses an $R$-extension $l'$ if and only if $l^h$ possesses an $R$-extension $L'$. By Lemma~\ref{decomplem}, any separable algebraic extension $L'/l^h$ is of the form $l'l^h$ for a henselian extension $l'/l$. It remains to note that $l'l^h/l^h$ is an $R$-extension if and only if $l'/l$ is an $R$-extension.
\end{proof}

\subsubsection{Splitting}
Now, we can extend Pank's theorem to arbitrary valued fields of height one.

\begin{theor}\label{htonesplit}
Assume that $k$ is a valued field of height 1 and let $k_{\tw}/k$ be a maximal purely wild extension. Then $k^t\otimes_kk_{\tw}=k^a$.
\end{theor}
\begin{proof}
Assume, first that $k$ is perfect. Set $K=k^h$, then $K_{\tw}=k_{\tw}\otimes_kK$ is a maximal purely wild extension of $K$ by Corollary~\ref{decomplcor}. Hence $K^t\otimes_KK_{\tw}=K^a$ by Pank's theorem \ref{pankth}, and using that $K^a=k^a$ and $K^t=k^t$ we obtain that $k^t\otimes_kk_w=k^t\otimes_KK_w=k^a$.

In the general case, let $l=k^{1/p^\infty}$ be the perfection of $k$. Then $l^t=k^t\otimes_kl$ and hence $k^t\otimes_kl_w=l^t\otimes_ll_w=l^a$. This gives a splitting of $k^a/k^t/k$ and it remains to use the equivalence of (0) and (1) in \S\ref{equivsec}.
\end{proof}

\begin{cor}\label{tamecor}
Any $p$-closed valued field of height one is tame.
\end{cor}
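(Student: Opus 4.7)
The plan is to deduce this as a clean corollary of the splitting theorem just proved (Theorem \ref{htonesplit}), the point being that $p$-closedness is exactly what kills the purely wild side of the splitting.

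First I fix a maximal purely wild extension $k_\tw/k$, whose existence is guaranteed by the first lemma in \S\ref{panksec}. By the equivalent characterizations of purely wild extensions recalled at the end of \S\ref{nonhenssec}, the extension $k_\tw/k$ is a totally ramified $p$-extension, where $p=\expchar(\tilk)$; in particular every finite subextension of $k_\tw/k$ has degree a power of $p$. By the definition in \S\ref{calPsec}, the hypothesis that $k$ is $p$-closed means that no non-trivial finite $p$-extension of $k$ exists, so we must have $k_\tw=k$.

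Applying Theorem \ref{htonesplit} to $k$ (legitimate since $k$ is of height one), we obtain
\[
k^a=k^t\otimes_kk_\tw=k^t\otimes_kk=k^t,
\]
and by the definition of a tame valued field in \S\ref{nonhenssec}, this says precisely that $k$ is tame.

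The argument is essentially formal once the splitting theorem is in hand, so no genuine obstacle is expected; equivalently, the corollary can be phrased as the implication (2)$\implies$(3) of \S\ref{equivsec} applied to $k$ itself, where the $p$-closedness of $k$ forces $k_\tw=k$ and hence $k=k_\tw$ is tame.
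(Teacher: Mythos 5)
Your argument is correct and matches the paper's proof of Corollary~\ref{tamecor} essentially verbatim: observe that any purely wild extension of $k$ is a $p$-extension, hence $k_\tw=k$ by $p$-closedness, and then apply Theorem~\ref{htonesplit} to conclude $k^a=k^t$. The paper's version is just more terse; the supplementary remark about the equivalences of \S\ref{equivsec} is also accurate but not needed.
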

\begin{proof}
Any extension $k_w/k$ is a $p$-extension. So, if $k$ is $p$-closed then $k_w=k$. If $k$ is also of height one then we deduce that $k^t=k^a$ by Theorem~\ref{htonesplit}.
\end{proof}

\subsection{Tameness of $p$-closed valued fields}
Our next aim is to extend Corollary~\ref{tamecor} to arbitrary valued fields. The basic tools are to approximate valued fields by valued fields of finite height and to represent valuations of finite height as compositions of valuations of height one.

\subsubsection{Composed valued fields}
If $k$ is a valued field and $\tilR$ is a valuation ring of $\tilk$ then the preimage of $\tilR$ under the map $\kcirc\to\tilk$ is a valuation ring $R$ of $k$ and one says that the valuation of $(k,R)$ is composed of the valuations of $(k,\kcirc)$ and $(\tilk,\tilR)$. Conversely, if $k$ is a valued field then for any prime ideal $n\subset\kcirc$ we have that $\kcirc$ is composed of the valuation rings $\kcirc_n$ and $\kcirc/n$.

If $k$ is of height $d<\infty$ then for any $0\le i\le d$ there exists a unique localization of $\kcirc$ of height $i$ that will be denoted $k_i^\circ$. Let $k_i=(k,k^\circ_i)$ denote the corresponding valued field and let $\tilk_i$ denote the residue field of $k_i$ with the valued field structure induced from $k$. In particular, the valuation of $k$ is composed of those of $k_i$ and $\tilk_i$.

\subsubsection{Tame extensions and composition}
For simplicity we only consider valued fields $k$ of finite height. Assume that $k$ is of height $d$ and set $p_i=\expcha(\tilk_i)$ and $p=\expcha(\tilk)$. In particular, $p=p_d$. If $X$ is a scheme and $x\in X$ is a generization of $y\in X$ then $\expcha(k(x))$ divides $\expcha(k(y))$. Since $\Spec(\kcirc)$ is a chain (ordered by generization) of $d+1$ points with residue fields $\tilk_i$, there exists $n$ with $0\le n\le d$ such that $p_i=1$ for $i<n$ and $p_i=p$ for $i\ge n$.

\begin{lem}\label{unramcompos}
Let $l/k$ be an algebraic extension of valued fields of height $d<\infty$ and $0\le i\le d$. Set $p=\expcha(\tilk)$ and $p_i=\expcha(\tilk_i)$. Then,

(i) $l/k$ is unramified if and only if both $l_i/k_i$ and $\till_i/\tilk_i$ are unramified.

(ii) $l/k$ is tame if and only if both $l_i/k_i$ and $\till_i/\tilk_i$ are tame and $|l_i^\times|/|k_i^\times|$ contains no non-trivial $p$-torsion. The condition on $|l_i^\times|/|k_i^\times|$ can be omitted when $p_i=p$.
\end{lem}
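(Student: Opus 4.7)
The plan is to reduce both parts to the finite henselian case and then extract the result from the multiplicativity of the classical ramification invariants in a composition of valuations.

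\textbf{Reductions.} First I reduce to $l/k$ finite: tameness and unramifiedness are closed under filtered unions and passage to subextensions, and the same holds for the conditions imposed on $l_i/k_i$ and $\till_i/\tilk_i$ (residue separability and $p'$-torsion of value-group quotients). Second I reduce to $k$ henselian: by definition $l/k$ is unramified (resp.\ tame) iff $l^h/k^h$ is, and the analogous equivalence for the composition factors follows from the commutation of henselization with the coarsening of a valuation and with passage to its residue field; henselian valuation rings are again henselian after either operation, so once $k$ is henselian, so are $k_i$, $\tilk_i$, $l$, $l_i$, $\till_i$.

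\textbf{Core calculation.} In the finite henselian case I work with $e_{l/k}=[|l^\times|:|k^\times|]$, $f_{l/k}=[\till:\tilk]$, and the defect $d_{l/k}=[l:k]/(e_{l/k}f_{l/k})$. The short exact sequence for a composed valuation
\[
0\to|\tilk_i^\times|\to|k^\times|\to|k_i^\times|\to 0,
\]
together with its $l$-analogue, yields by the snake lemma
\[
0\to|\till_i^\times|/|\tilk_i^\times|\to|l^\times|/|k^\times|\to|l_i^\times|/|k_i^\times|\to 0,
\]
hence $e_{l/k}=e_{l_i/k_i}\cdot e_{\till_i/\tilk_i}$. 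Iterated residues give $f_{l/k}=f_{\till_i/\tilk_i}$ and $f_{l_i/k_i}=[\till_i:\tilk_i]$, and multiplicativity of defects in composite extensions produces $d_{l/k}=d_{l_i/k_i}\cdot d_{\till_i/\tilk_i}$. The residue extension of $l_i/k_i$ is $\till_i/\tilk_i$ as an abstract field extension, and one more residue recovers $\till/\tilk$. For part~(i), $l/k$ unramified means $e=d=1$ and $\till/\tilk$ separable (Lemma~\ref{unramlem}); by multiplicativity this is equivalent to the same triple of conditions for both $l_i/k_i$ and $\till_i/\tilk_i$, and separability of the middle residue extension $\till_i/\tilk_i$ comes for free by base-changing the \'etale algebra $\lcirc/\kcirc$ along $\kcirc\to\tilk_i^\circ$. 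For part~(ii), tameness of $l/k$ amounts to $d=1$, $\till/\tilk$ separable, and $p\nmid e_{l/k}$, and the snake-lemma sequence above shows $|l^\times|/|k^\times|$ is $p'$-torsion iff both its sub and quotient pieces are.

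\textbf{Main subtlety.} Putting the pieces together, tameness of $l/k$ is equivalent to tameness of $\till_i/\tilk_i$ (whose residue characteristic is still $p$) plus tameness of $l_i/k_i$ \emph{plus} the extra requirement that $|l_i^\times|/|k_i^\times|$ has no $p$-torsion. The main obstacle is the asymmetry that results from the tame condition for $l_i/k_i$ referring to its own residue characteristic $p_i$, which may be $1$: when $p_i=p$ the extra $p$-torsion condition is automatic from tameness of $l_i/k_i$; when $p_i=1$ the definition of tame imposes no restriction at all on $|l_i^\times|/|k_i^\times|$, so that condition has to be carried separately. This is precisely the asymmetric phrasing in the statement. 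A secondary technical point is the commutation of henselization with both localization at a prime and passage to the residue ring for valuation rings used in the reduction step; this is standard but warrants a careful citation.
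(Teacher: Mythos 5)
Your reduction to the henselian case rests on a false premise, and this is the main gap. You write that henselization commutes ``with the coarsening of a valuation,'' i.e.\ that $(k^h)_i=(k_i)^h$, and label this ``standard.'' It is not true. Take $k=\bfQ(x)$ with the rank-$2$ valuation composed of the $x$-adic valuation $v_x$ and the $p$-adic valuation $v_p$ on the residue field $\bfQ$ of $v_x$. Then $k_1^\circ=\bfQ[x]_{(x)}$, so $(k_1)^h$ has residue field $\bfQ$. On the other hand, the residue field of $(k^h)_1$ is a henselian valued extension of $(\bfQ,v_p)$ (it is a quotient of the henselian local ring $(k^h)^\circ$), hence contains $(\bfQ,v_p)^h\supsetneq\bfQ$; already the finite unramified extension $m=k(\sqrt{1+p})\subseteq k^h$ contributes a non-trivial residue extension $\tilm_1/\tilk_1$. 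So $(k^h)_1\supsetneq(k_1)^h$. The point is that $k^h/k$ is strictly unramified, but its coarsening $(k^h)_i/k_i$ is not: the residue extension becomes non-trivial, precisely because the residue field of $k_i$ acquires a henselization. Thus $e,f,d$ are not preserved by the substitution $k\leadsto k^h$ in the naive sense your snake-lemma bookkeeping would require, and your invariant-counting argument does not transfer.

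What \emph{is} true — and what the paper exploits — is that $(k^h)_i/(k_i)^h$ is unramified (though not strictly), so the unramified/tame conditions are unaffected by passing to $k^h$; but showing this already amounts to the content of part~(i). This is exactly why the paper proves~(i) without any reduction to the henselian case, working directly with essentially \'etale extensions of valuation rings (via Lemma~\ref{unramlem} and the observation that $\lcirc/\kcirc$ essentially \'etale implies $l_i^\circ/k_i^\circ$ and $(\till_i)^\circ/(\tilk_i)^\circ$ essentially \'etale), and only afterwards uses~(i) to justify replacing $(l,k)$ by $(l^u,k^u)$ in the proof of~(ii). Your proposal short-circuits this by assuming the reduction you would need~(i) to prove. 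Once you are genuinely in the henselian, finite case, your multiplicativity argument ($e_{l/k}=e_{l_i/k_i}e_{\till_i/\tilk_i}$, $d_{l/k}=d_{l_i/k_i}d_{\till_i/\tilk_i}$, and the characterization of tame via $d=1$, $p'\,|\,e$, separable residue extension) is a legitimate and somewhat more quantitative alternative to the paper's argument in step~(ii), and correctly isolates the asymmetry coming from $p_i=1$ vs.\ $p_i=p$. A secondary point you should make explicit: the separability of $\till_i/\tilk_i$ as an abstract extension (needed for $l_i/k_i$ to be tame) is not a direct consequence of multiplicativity; it follows only after one first concludes that $\till_i/\tilk_i$ is a tame \emph{valued} extension (tame extensions are separable, since purely inseparable extensions are purely wild), and this ordering of deductions should be stated.
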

\begin{proof}
If $\{l_\alp/k\}_\alp$ is the family of finite subextensions of $l/k$ then $l_i=\cup_\alp(l_\alp)_i$ and $\till_i=\cup_\alp\wt{(l_\alp)}_i$. Hence it suffices to prove the lemma for the extensions $l_\alp/k$, and we can assume in the sequel that $l/k$ is finite.

The opposite implication in (i) is precisely \cite[Proposition~2.2.2]{temst}. Conversely, assume that $l/k$ is unramified, and so $\lcirc/\kcirc$ is essentially \'etale by Lemma~\ref{unramlem}. Since $\lcirc\into l_i^\circ$ and $\kcirc\into k_i^\circ$ are localizations, $l_i^\circ/k_i^\circ$ is essentially \'etale, and hence $l_i/k_i$ is unramified. Let $n=\Ker(\kcirc\to(\tilk_i)^\circ)$ and $m=\Ker(\lcirc\to(\till_i)^\circ)$. Since $\Spec(\lcirc)\to\Spec(\kcirc)$ is a bijection, $m$ is the radical of $n\lcirc$. Since $\lcirc/n\lcirc$ is essentially \'etale over $(\tilk_i)^\circ$, it is reduced and we obtain that $m=n\lcirc$. Thus, $(\till_i)^\circ=\lcirc/m$ is essentially \'etale over $(\tilk_i)^\circ$, and hence $\till_i/\tilk_i$ is unramified.

Let us prove (ii). Since $[l:k]<\infty$, the condition on $|l_i^\times|/|k_i^\times|$ means that $(p,e_{l_i/k_i})=1$. Note that this is automatically so when $l_i/k_i$ is tame and $p_i=p$. It remains to prove the ``if and only if" claim. Our first goal is to replace $l$ and $k$ by $L=l^u$ and $K=k^u$. Note that $l/k$ is tame if and only if $L/K$ is tame. In addition, the extensions $K_i/k_i$, $L_i/l_i$, $\tilK_i/\tilk_i$ and $\tilL_i/\till_i$ are unramified by (i), hence $L_i/K_i$ is tame if and only if $l_i/k_i$ is tame, and $\tilL_i/\tilK_i$ is tame if and only if $\till_i/\tilk_i$ is tame. Thus, it suffices to prove that $L/K$ is tame if and only if both $L_i/K_i$ and $\tilL_i/\tilK_i$ are tame and $(p,e_{l_i/k_i})=1$. Note also that $(\tilK_i)^u=\tilK_i$. Indeed, if there exists a non-trivial unramified extension of $\tilK_i$ then we can lift it to a valued extension $F/K$, and it then follows from (i) that $F/K$ is unramified, which contradicts that $K=K^u$.

Since $K=K^u$, any tame extension is obtained by adjoining roots $a^{1/n}$ with $(p,n)=1$. Hence if $L/K$ is tame then both $L_i/K_i$ and $\tilL_i/\tilK_i$ are tame and $(p,e_{L_i/K_i})=1$.

Conversely, assume that $L/K$ is not tame, in particular, $[L:K]$ is divisible by $p$ and $p>1$. If $L_i/K_i$ is not tame we are done, otherwise $[L:K]=[L_i:K_i]=e_{L_i/K_i}f_{L_i/K_i}$ is divisible by $p$. If $p|e_{L_i/K_i}$ we are done, otherwise $p|f_{L_i/K_i}$. In the latter case, $p=\cha(\tilK_i)$ divides $[\tilL_i:\tilK_i]$. Since $\tilK_i=(\tilK_i)^u$, as was observed above, this implies that $\tilL_i/\tilK_i$ is not tame.
\end{proof}

\begin{cor}\label{tamecritlem}
Let $k$ be a valued field of height $d<\infty$ and let $0\le i\le d$. Set $p=\expcha(\tilk)$ and $p_i=\expcha(\tilk_i)$. Then $k$ is tame if and only if $k_i$ and $\tilk_i$ are tame and $|k^\times_i|$ is $p$-divisible. The condition on $p$-divisibility can be omitted when $p_i=p$.
\end{cor}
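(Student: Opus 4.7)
The plan is to derive this corollary from Lemma~\ref{unramcompos}(ii), applied once to $k^a/k$ for the forward implication and once to a general algebraic extension $l/k$ for the reverse.

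For the forward direction, I would fix a prolongation of the valuation of $k$ to $k^a$ and take its $i$-th truncation $v_i$, obtaining a tame extension $(k^a,v_i)/k_i$. Lemma~\ref{unramcompos}(ii) then yields that $(k^a,v_i)/k_i$ is tame, that $\wt{(k^a)}_i/\tilk_i$ is tame, and that $|(k^a)_i^\times|/|k_i^\times|$ has no non-trivial $p$-torsion. To translate these into the conditions of the corollary I would use three auxiliary facts: (a) every prolongation of $k_i$'s valuation to $k^a$ arises as the $i$-th truncation of a height-$d$ prolongation of $k$'s valuation (by composing with an arbitrary extension of $\tilk_i$'s valuation to the residue field), and all such prolongations are conjugate under $\Aut(k^a/k)$, so tameness of $(k^a,v_i)/k_i$ is inherited by every algebraic subextension of $k_i$, giving tameness of $k_i$; (b) the residue field of a valuation on an algebraically closed field is algebraically closed, so $\wt{(k^a)}_i=(\tilk_i)^a$ as fields and the same conjugation argument at the residue level gives tameness of $\tilk_i$; (c) the value group $|(k^a)_i^\times|$ is divisible, so a $p$-th root of any $\gamma\in|k_i^\times|$ already exists in $|(k^a)_i^\times|$, and the $p$-torsion-freeness of the quotient forces it into $|k_i^\times|$, establishing $p$-divisibility.

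For the reverse direction, I would fix an arbitrary algebraic extension of valued fields $l/k$ and check the three hypotheses of Lemma~\ref{unramcompos}(ii). Tameness of $l_i/k_i$ and $\till_i/\tilk_i$ is immediate from the tameness of $k_i$ and $\tilk_i$, respectively. For the value-group condition, given $\gamma\in|l_i^\times|$ with $\gamma^p\in|k_i^\times|$, $p$-divisibility produces $\delta\in|k_i^\times|$ with $\delta^p=\gamma^p$, and then $(\gamma/\delta)^p=1$ in the torsion-free totally ordered group $|l_i^\times|$ forces $\gamma=\delta\in|k_i^\times|$. The last clause, that $p$-divisibility is automatic when $p_i=p$, is inherited verbatim from the analogous strengthening in Lemma~\ref{unramcompos}(ii).

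The main obstacle is in the forward direction: Lemma~\ref{unramcompos}(ii) a priori only produces tameness of the single extension $(k^a,v_i)/k_i$, whereas tameness of $k_i$ requires \emph{every} algebraic extension of $k_i$ to be tame. The conjugation argument in (a) above is what closes this gap, by transporting tameness simultaneously across all prolongations of the valuation of $k_i$ to $k^a$.
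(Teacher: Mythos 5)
Your proposal is correct and takes essentially the same route as the paper: both derive the corollary from Lemma~\ref{unramcompos}(ii). The paper's proof is more compact because it applies the lemma once to the single extension $l=k^a$ and observes that $(k^a)_i=(k_i)^a$ as valued fields with residue field $(\tilk_i)^a$, so the three conditions of the lemma become literally the three conditions of the corollary (with divisibility of $|(k^a)_i^\times|$ converting the $p$-torsion-freeness of the quotient into $p$-divisibility of $|k_i^\times|$, exactly as in your step (c)); the ``if and only if'' then holds in one stroke. You instead treat the two directions separately — applying the lemma to $k^a/k$ for the forward direction and to an arbitrary $l/k$ for the reverse — and spell out the conjugacy of prolongations underlying the identification $(k^a)_i=(k_i)^a$. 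Those conjugation arguments in (a) and (b) are correct, but they are not a gap the paper needs to fill: they are exactly why ``$k$ is tame'' is equivalent to ``$k^a/k$ is tame for one (hence any) prolongation,'' which the paper takes as part of the basic theory (``Clearly, this happens if and only if any algebraic extension $l/k$ is tame''). So the concern you flag as the ``main obstacle'' is real but already absorbed into the definitions; making it explicit is fine, just longer.
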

\begin{proof}
Note that $(k^a)_i=(k_i)^a$ and its residue field is $(\tilk_i)^a$. So, we can denote these fields $k_i^a$ and $\tilk_i^a$ without any ambiguity. By Lemma~\ref{unramcompos}(ii), $k^a/k$ is tame if and only if both $k_i^a/k_i$ and $\tilk_i^a/\tilk_i$ are tame and $|k_i^\times|$ is $p$-divisible. In addition, if $k_i$ is tame then $|k_i^\times|$ is $p_i$-divisible, so the divisibility condition can be omitted when $p=p_i$. The corollary follows.
\end{proof}

\subsubsection{The tameness theorem}
Now, we can study $p$-closed valued fields of an arbitrary height.

\begin{theor}\label{tameth}
Let $k$ be a valued field of residual characteristic exponent $p$. If $k$ is $p$-closed then it is tame.
\end{theor}
\begin{proof}
Assume first that $k$ is of height $d<\infty$. Since $k$ is $p$-closed, it is perfect. In particular, if $d=0$ then $k$ is tame. The case $d=1$ is covered by Corollary~\ref{tamecor}(ii), so assume that $d>1$ and the theorem is proved for any height smaller than $d$. Choose a natural $i$ with $0<i<d$. Then $k_i$ is obviously $p$-closed and $\tilk_i$ is $p$-closed as otherwise we could lift a $p$-extension of $\tilk_i$ to a $p$-extension of $k$. Thus, $k_i$ and $\tilk_i$ are tame by the induction assumption. In addition, $|k_i^\times|$ is $p$-divisible as otherwise $k$ would have a non-trivial extension $k(a^{1/p})$ of degree $p$. Therefore $k$ is tame by Corollary~\ref{tamecritlem}.

Now, let us prove the general case. Let $l/k$ be a finite extension. Find a subfield $k'\subseteq k$ such that $k'$ is of finite transcendence degree over the prime subfield of $k$ and there exists an extension $l'/k'$ such that $l=l'\otimes_{k'}k$. The first condition implies that $k'$ is of finite height. Replacing $k'$ by its algebraic closure in $k$ we can also achieve that $k'$ is $p$-closed. Then $k'$ is tame by the case we proved first, and hence $l'/k'$ is tame. Therefore $l/k$ is tame and we are done.
\end{proof}

\section{Tame distillation theorems}\label{tamedistsec}

\subsection{Riemann-Zariski spaces}\label{rzsec}
We will use the Riemann-Zariski spaces of pointed schemes introduced in \cite[Section~3.2]{temst}, so let us briefly recall definitions and basic properties. All constructions and results of \cite[Section~3.2]{temst} deal with dominant points $\Spec(K)\to X$ but apply to non-dominant ones as well, so we will work in this greater generality.

\subsubsection{Limits}
In what follows, a {\em pointed scheme} is a morphism $\veps\:\Spec(K)\to X$ with $K$ a field and $X$ a quasi-compact and quasi-separated scheme. Often we will simply write $X$, skipping the point in the notation. A morphism of pointed schemes is a compatible pair of morphisms $j\:\Spec(K')\to\Spec(K)$ and $f\:X'\to X$. A morphism is called a {\em modification} if $j$ is an isomorphism and $f$ is proper (but not necessarily surjective). The {\em Riemann-Zariski space} of a pointed scheme is defined as $\RZ_K(X)=\lim_i X_i$ where the limit is taken in the category of locally ringed spaces over the family of all modifications $X_i\to X$. It is easy to see that the latter family is filtered. In addition, dominantly pointed schemes are cofinal among all modifications, hence our definition is equivalent to the definition at \cite[page 621]{temst}.

Note that $|\RZ_K(X)|=\lim_i|X_i|$ as topological spaces and $\colim_i\pi_i^{-1}\calO_{X_i}=\calO_\gtX$, where $\pi_i\:\gtX=\RZ_K(X)\to X_i$ are the projections. In particular, $\calO_{\gtX,\gtx}=\colim_i\calO_{X_i,\pi_i(\gtx)}$ for any point $\gtx$. A subset $U\subseteq\gtX$ is called {\em constructible} if it is of the form $\pi_i^{-1}(U_i)$ for a constructible $U_i\subseteq X_i$, and constructible subsets form a base of the {\em constructible topology} on $\gtX$.

\subsubsection{The valuative interpretation}
Let $\Val_K(X)$ denote the set of isomorphism classes of morphisms $\gtx:\Spec(\calO)\to X$, where $\calO$ is a valuation ring of $K$ and $\gtx$ extends the point $\Spec(K)\to X$. This is the obvious extension of the definition at \cite[page 622]{temst} to arbitrary pointed schemes.

\begin{lem}\label{rzlem}
Let $\Spec(K)\to X$ be a pointed scheme and $\gtX=\RZ_K(X)$, then

(i) $\gtX$ is compact in the constructible topology and quasi-compact in the usual (Zariski) topology.

(ii) Each stalk $\calO_{\gtX,\gtx}$ is a valuation ring and the correspondence $\gtx\mapsto\calO_{\gtX,\gtx}$ induces a bijection $\RZ_K(X)\toisom\Val_K(X)$.
\end{lem}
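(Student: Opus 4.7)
For (i), the plan is to leverage the fact that each member $X_i$ of the inverse system is a quasi-compact quasi-separated scheme, hence a spectral space, hence compact in the constructible topology. Since $|\gtX|=\lim_i|X_i|$ is a limit of compact (Hausdorff) spaces in the constructible topology, and a filtered limit of compact Hausdorff spaces is compact, $\gtX$ is compact in the constructible topology. Quasi-compactness in the Zariski topology then follows at once because the Zariski topology is coarser than the constructible one.

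For (ii), the main step is to show that $\calO_{\gtX,\gtx}$ is a valuation ring of $K$ for every $\gtx\in\gtX$. First I would observe that $\calO_{\gtX,\gtx}=\colim_i\calO_{X_i,\pi_i(\gtx)}$ is a filtered colimit of local rings along local homomorphisms, hence is a local ring, and that the composite $\calO_{\gtX,\gtx}\to K$ (coming from the pointing) is injective on each factor so $\calO_{\gtX,\gtx}\subseteq K$. The crucial thing to prove is that for every $a\in K^\times$, either $a$ or $a^{-1}$ lies in $\calO_{\gtX,\gtx}$. For this I would produce, for each such $a$, a modification $X_a\to X$ such that on $X_a$ one of $a,a^{-1}$ is regular at the appropriate point: concretely, take $X_a$ to be the schematic closure of the composite $\Spec(K)\to X\times_\bfZ\bfP^1_\bfZ$, where the second coordinate sends the point to $a$; the projection to $X$ is proper, and by construction $X_a$ is covered by the preimages of the two standard affines of $\bfP^1$, on which $a$ respectively $a^{-1}$ is a section of the structure sheaf. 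Pulling back $\gtx$ to $X_a$ shows that the corresponding element of $\calO_{\gtX,\gtx}$ is one of $a,a^{-1}$.

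Once stalks are known to be valuation rings of $K$, the bijection $\gtX\toisom\Val_K(X)$ is essentially the valuative criterion of properness. The forward map sends $\gtx$ to $\Spec(\calO_{\gtX,\gtx})\to\gtX\to X$, which extends the given point $\Spec(K)\to X$ because $\calO_{\gtX,\gtx}\subseteq K$ and the localization map is compatible with the projection to each $X_i$. For the inverse, given a morphism $\Spec(\calO)\to X$ from a valuation ring of $K$ extending the pointing, the valuative criterion applied to each proper modification $X_i\to X$ produces a unique lift $\Spec(\calO)\to X_i$; the images of the closed point form a compatible system, hence a point $\gtx\in\lim_iX_i=\gtX$, and taking stalks recovers $\calO$ (since $\calO_{\gtX,\gtx}\subseteq\calO\subseteq K$ and both are valuation rings of $K$ with the same dominance relation to the $\calO_{X_i,\pi_i(\gtx)}$, they must agree). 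Checking that the two constructions are mutually inverse is then formal.

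The main obstacle I anticipate is the construction of the modification that realizes one of $a,a^{-1}$ as a regular function; everything else is a standard compactness argument or an application of the valuative criterion. One should in particular be careful that $X_a\to X$ is indeed a modification in the sense used here (proper, with the pointing factoring through it) when the image of $\Spec(K)$ in $X$ is not dense, which is why the formulation allows non-dominant pointings and $f$ is only required to be proper, not surjective.
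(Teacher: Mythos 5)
Your proposal is correct and follows essentially the same route as the paper: you show each stalk $\calO_{\gtX,\gtx}$ is a valuation ring of $K$ by producing a modification on which one of $a,a^{-1}$ becomes a regular section (via the schematic closure of $\Spec(K)\to X\times_\bfZ\bfP^1_\bfZ$), while the paper does this by blowing up the ideal $(f,g)$ in some $\calO_{X_i,\pi_i(\gtx)}$ with $a=f/g$ — the same underlying construction — and in both cases the bijection $\RZ_K(X)\toisom\Val_K(X)$ is the valuative criterion of properness. One minor imprecision: the map $\calO_{X_i,\pi_i(\gtx)}\to K$ is not injective for an arbitrary modification $X_i$ (only for a tight one, i.e.\ with $\Spec(K)\to X_i$ schematically dominant), so it is not literally "injective on each factor"; this costs nothing because tight modifications are cofinal in the system — a fact you already invoke — which suffices to make the filtered colimit map $\calO_{\gtX,\gtx}\to K$ injective.
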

\begin{proof}
Each scheme $X_i$ is compact with respect to the constructible topology, hence their limit, which is $\gtX$ with the constructible topology, is compact too. The Zariski topology of $\gtX$ is weaker, hence we obtain (i).

For a detailed proof of (ii) we refer to \cite[Corollaries~3.2.4 and 3.2.5]{temst}. The main idea is that for any $f,g\in\calO_{\gtX,\gtx}$ one of them divides another one, as can be seen by blowing up the ideal $(f,g)$ in some $\calO_{X_i,\pi_i(\gtx)}$. Hence $\calO_{\gtX,\gtx}$ is a valuation ring and the bijectivity of the induced map $\RZ_K(X)\toisom\Val_K(X)$ follows from the valuative criterion of properness.
\end{proof}

We use the bijection $\RZ_K(X)\toisom\Val_K(X)$ to provide the target with the Zariski and constructible topologies.

\begin{rem}
For the sake of completeness we note that the topologies of $\gtX=\RZ_K(X)$ and the sheaf $\calO_\gtX$ can be defined on $\Val_K(X)$ in purely valuation-theoretic terms, see e.g. \cite[Section~2.4, p.78]{insepunif}.
\end{rem}

\subsubsection{Functoriality}
The constructions of $\Val_K(X)$ and $\RZ_K(X)$ are functorial. Indeed, assume that $j\:\eta_L\to\eta_X$, $f\:Y\to X$ is a morphism of pointed schemes $\eta_L\to Y$ and $\eta_K\to X$. It induces a map $\Val_L(Y)\to\Val_K(X)$ by restriction of valuations from $L$ to $K$. Namely, for an element $T=\Spec(\calO)\to Y$ of $\Val_L(Y)$ consider the composition $T\to Y\to X$. Since the generic point of $T$ factors through $\eta_K$, the morphism $T\to X$ factors through the spectrum $T'$ of the valuation ring $\calO\cap K$. This gives an element $T'\to X$ of $\Val_K(X)$.

Let us say that a modification $X_i\to X$ of $\eta_K\to X$ is tight if $\eta_K\to X_i$ is schematically dominant. Tight modifications are cofinal; for example $X_i$ is dominated by the schematic image of $\eta_K$ in $X_i$. In particular, $\RZ_K(X)$ is the limit of all tight modifications of $X$, and we use this in the following paragraph.

For any modification $X_i\to X$ of $\eta_K\to X$ there exists a tight modification $Y_j\to Y$ of $\eta_L\to Y$ such that the composition $Y_j\to Y\to X$ factors through $Y_j\to X_i$. This factorization is unique since $\eta_L\to Y_j$ is schematically dominant and $X_i\to X$ is separated (even proper). Therefore different maps $Y_j\to X_i$ are compatible and we obtain a map of the limits $\RZ_L(Y)\to\RZ_K(X)$. Using the valuative criterion of properness it is easy to see that the two maps we have constructed are compatible with respect to the bijections $\RZ_L(Y)=\Val_L(Y)$ and $\RZ_K(X)=\Val_K(X)$.

\begin{lem}\label{valmaps}
Assume given a morphism of pointed schemes $\eta_L\to Y$ and $\eta_K\to X$ such that the morphism $Y\to X$ is of finite type. Then the induced map $\psi\:\Val_L(Y)\to\Val_K(X)$ is open and continuous both in Zariski and constructible topologies. In particular, it takes constructible sets to constructible sets.
\end{lem}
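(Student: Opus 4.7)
The plan is to pass to the limit description of Riemann--Zariski spaces. By cofinality of tight modifications, write $\gtY = \lim_j Y_j$ and $\gtX = \lim_i X_i$. For each tight modification $X_i \to X$ of $\eta_K \to X$, there is a cofinal tight modification $Y_j \to Y$ together with a morphism $\psi_{ji}\colon Y_j \to X_i$ compatible with $\psi$; since $Y \to X$ is of finite type and $X_i \to X$ is proper, $\psi_{ji}$ is of finite type between qcqs schemes, and $\psi$ is the limit of the $\psi_{ji}$. Continuity of $\psi$ in both the Zariski and constructible topologies is then immediate: the $\psi$-preimage of a basic open (resp.\ constructible) subset $\pi_i^{-1}(U_i) \subseteq \gtX$ equals $\pi_j^{-1}(\psi_{ji}^{-1}(U_i))$, which is of the same type because $\psi_{ji}$ is a continuous scheme morphism.

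To show that $\psi$ preserves constructibility, consider a basic constructible $C = \pi_j^{-1}(C_j)$ with $C_j \subseteq Y_j$ constructible. Chevalley's theorem, applied to the finite-type map $\psi_{ji}$, gives that $\psi_{ji}(C_j) \subseteq X_i$ is constructible, and it suffices to prove $\psi(C) = \pi_i^{-1}(\psi_{ji}(C_j))$. The inclusion $\subseteq$ is immediate by compatibility of centers. For the reverse, given $w$ with $\pi_i(w) = \psi_{ji}(y)$ for some $y \in C_j$, I would apply the standard lifting principle from valuation theory: for the local homomorphism $\calO_{X_i,x} \to \calO_{Y_j,y}$ (with $\Frac(\calO_{Y_j,y}) = L$ by tightness) and the valuation ring $\calO_w$ dominating $\calO_{X_i,x}$, there exists a valuation ring $\calO_v \subseteq L$ dominating both $\calO_{Y_j,y}$ and $\calO_w$. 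Such $\calO_v$ produces a $v \in C$ with $\psi(v) = w$, since domination of $\calO_w$ forces $\calO_v \cap K = \calO_w$ (not a strict coarsening).

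For openness, any open $V \subseteq \gtY$ is a union of basic quasi-compact opens $V_\alpha = \pi_j^{-1}(U_j)$, so it suffices to show each $\psi(V_\alpha)$ is open. By the previous paragraph $\psi(V_\alpha)$ is constructible, so by the standard spectral-space criterion it will be open if and only if it is stable under generization. Given $w = \psi(v) \in \psi(V_\alpha)$ with $v \in V_\alpha$ and a generization $w'$ of $w$ in $\gtX$ (equivalently $\calO_{w'} \supseteq \calO_w$), I would lift $w'$ to a generization $v'$ of $v$: the integral closure $B$ of $\calO_w$ in $L$ is a Pr\"ufer ring by Lemma~\ref{extlem}, with $\calO_v = B_{\mathfrak{m}}$ for some maximal ideal $\mathfrak{m}$; going-down for the integral extension $\calO_w \subseteq B$ produces a prime $\mathfrak{n} \subseteq \mathfrak{m}$ of $B$ with $\mathfrak{n} \cap \calO_w$ equal to the prime of $\calO_w$ defining $w'$, and $\calO_{v'} := B_{\mathfrak{n}} \supseteq \calO_v$ then restricts to $\calO_{w'}$ on $K$. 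Since $V_\alpha$ is generization-stable in $\gtY$, we get $v' \in V_\alpha$, and hence $w' = \psi(v') \in \psi(V_\alpha)$.

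The main obstacle is the two valuation-lifting steps. For preservation of constructibility one must produce a valuation of $L$ with prescribed restriction to $K$ and prescribed center on $Y_j$; this relies on the standard existence of compatibly dominating valuation rings (a Zorn argument on local subrings of $L$). For openness one must lift a coarsening of a valuation of $K$ to a coarsening of a given valuation of $L$, and this is exactly what going-down through the Pr\"ufer ring $B$ (provided by Lemma~\ref{extlem}) delivers. Everything else is bookkeeping between the limit $\gtY = \lim_j Y_j$ and its finite-level approximations.
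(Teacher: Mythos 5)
Your continuity argument matches the paper's and is fine. But the "standard lifting principle" you invoke — given a local homomorphism $\calO_{X_i,x}\to\calO_{Y_j,y}$ and a valuation ring $\calO_w$ dominating $\calO_{X_i,x}$, there always exists a valuation ring of $L$ dominating both $\calO_{Y_j,y}$ and $\calO_w$ — is false in this generality, and it is the crux of both your constructibility and your openness step.

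Here is a concrete failure. Take $X=Y=\bfA^2_k=\Spec\,k[s,t]$ with $K=L=k(s,t)$, let $X_i=X$, and let $Y_j=\mathrm{Bl}_0(\bfA^2)$, a tight modification, so $\psi_{ji}$ is the blow-up morphism. Let $y\in Y_j$ be the point of the exceptional divisor where $u=t/s$ vanishes, $x=\psi_{ji}(y)=0$, and let $\calO_w\subset K$ be the rank-$2$ valuation ring with $v(s)=(1,0)$, $v(t)=(0,1)$ in $\bfZ^2$ ordered lexicographically. Then $\calO_w$ dominates $\calO_{X_i,x}$, but $v(u)=(-1,1)<0$, so $u\notin\calO_w$ and, since here $L=K$, the only candidate $\calO_v$ is $\calO_w$ itself, which has its center at the point $u=\infty$, not at $y$. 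No $\calO_v$ dominating $\calO_{Y_j,y}$ with $\calO_v\cap K=\calO_w$ exists, and your asserted equality $\psi(C)=\pi_i^{-1}(\psi_{ji}(C_j))$ fails (take $C_j=\{y\}$). The underlying obstruction is that the existence of a compatibly dominating valuation requires that $\Spec(\calO_w\otimes_{\calO_{X_i,x}}\calO_{Y_j,y})$ hit the pair of closed points and surject onto $L$, which is not automatic; nonflatness is exactly what kills it. The paper deals with this by first replacing $X_i$ so that $k(X_i)=K$ and then applying Raynaud–Gruson flattening to replace $\psi_{ji}$ by a flat finite-presentation morphism; only then is $\calO_w\otimes_{\calO_{X_i,x}}\calO_{Y_j,y}$ a domain with fraction field $L$, which is what makes the lifting go through and yields $\psi(V)=\pi_i^{-1}(f(V_j))$.

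A minor secondary point: tightness of $Y_j$ means $\eta_L\to Y_j$ is schematically dominant, i.e.\ $\calO_{Y_j}\hookrightarrow L$; it does \emph{not} give $\Frac(\calO_{Y_j,y})=L$, which generally fails unless $k(Y_j)=L$. Your openness step (constructibility plus going-down through the Pr\"ufer ring of Lemma~\ref{extlem} to show stability under generization) is a pleasant and correct observation, but since it consumes the constructibility claim as an input, it does not rescue the argument. Once you insert the two reductions (enlarging $X_i$ so that $k(X_i)=K$, and flattening) before the lifting step, your route and the paper's coincide.
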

\begin{proof}
We will use the identifications $\Val_L(Y)=\RZ_L(Y)$ and $\Val_K(X)=\RZ_K(X)$. Let $\{\eta_L\to Y_j\}$ and $\{\eta_K\to X_i\}$ denote the families of tight modifications of $\eta_L\to Y$ and $\eta_K\to X$, respectively. In particular, $X_i$ and $Y_j$ are integral schemes. By $\lam_j\:\RZ_L(Y)\to Y_j$ and $\pi_i\:\RZ_K(X)\to X_i$ we denote the projections.

We can work with bases of topologies, so assume that  $V\subseteq\RZ_L(Y)$ and $U\subseteq\RZ_K(X)$ are open quasi-compact (resp. constructible) sets. Then choosing $i$ and $j$ large enough we can assume that $V$ and $U$ are the preimages of open (resp. constructible) sets $V_j\subseteq Y_j$ and $U_i\subseteq X_i$. Enlarging $j$ we can also achieve that the composed morphism $\eta_L\to\eta_K\to X_i$ factors as $\eta_L\to Y_j\to X_i$ for some morphism $f\:Y_j\to X_i$. This gives a commutative square
$$\xymatrix{
\RZ_L(Y)\ar[r]^(.6){\lam_j}\ar[d]^\psi& Y_j\ar[d]^f\\
\RZ_K(X)\ar[r]^(.6){\pi_i}& X_i
}$$

Since $f^{-1}(U_i)$ is open (resp. constructible), the set $\psi^{-1}(U)=\lam_j^{-1}(f^{-1}(U_i))$ is open (resp. constructible) too. To prove that $\psi$ is open we will first reduce to the case when $K=k(X_i)$ and $f$ is flat.

Let $\{K_l\}$ be the family of finitely generated subextensions of $K/k(X_i)$. For any $i'\ge i$ the morphism $\eta_K\to X_{i'}$ factors through $\Spec(K_l)$ with a large enough $l$. Hence $\RZ_K(X)=\lim_l\RZ_{K_l}(X)$, and it suffices to prove that the maps $\RZ_L(Y)\to\RZ_{K_l}(X)$ are open. Thus, replacing $K$ by $K_l$ we can assume that $K/k(X_i)$ is finitely generated. Then it is easy to see that $\eta_K\to X_i$ possesses a modification $\eta_K\to X_{i'}$ such that $k(X_{i'})=K$. So, enlarging $i$ and $j$ we can assume that $k(X_i)=K$.

Furthermore, by the flattening theorem of Raynaud-Gruson, there exists a blow up $X'\to X_i$ such that the strict transform $Y'$ of $Y_j$ is flat and of finite presentation over $X'$. (One cannot use \cite[Th\'eor\`eme~5.7.9]{RG} directly since $f$ does not have to be of finite presentation, but \cite[tag~081R]{stacks} extends the claim to the case we need. Note that we use here that $f$ is flat and of finite presentation over a non-empty open subscheme of $X_i$ by \cite[tag~052A]{stacks}.) Replacing $X_i$ and $Y_j$ by $X'$ and $Y'$, we can assume that $f$ is flat and of finite presentation. Then $f$ is open in Zariski and constructible topologies, and hence $f(V_j)$ is open (resp. constructible).

The flatness of $f$ implies that for any valuation ring $\calO\in\RZ_K$ centered on a point $x\in X_i$ and a preimage $y\in f^{-1}(x)$, there exists a valuation ring $\calO'$ of $L$ extending $\calO$ and centered on $y$. Indeed, $\calO\otimes_{\calO_{X_i,x}}\calO_{Y_j,y}$ is a domain with field of fractions $k(Y_j)$, hence we can take any valuation ring $\calO'$ centered on a preimage of $y$ in $\Spec(\calO)\times_{X_i}\Spec(\calO_{Y_j,y})$. Thus, the map $\RZ_L(Y)\to\RZ_K(X)\times_{X_i}Y_j$ is surjective, and this implies that $\psi(V)=\pi_i^{-1}(f(V_j))$. Hence $\psi(V)$ is open (resp. constructible).
\end{proof}

\subsubsection{Absolute Riemann-Zariski spaces}\label{absrzsec}
Note that $\Val_K=\Val_K(\Spec(\bfZ))$ is the set of all valuation rings of $K$ and for any pointed scheme $\eta_K\to X$ we have a natural projection $p:\Val_K(X)\to\Val_K$ which is injective if and only if $X$ is separated. We denote the image of $p$ by $\Val_K(X)'$; it is the set of all valuations of $K$ with a center on $X$. Being the image of a compact space, $\Val_K(X)'$ is compact in the constructible topology, and hence quasi-compact in the Zariski topology.

For any extension of fields $L/K$ we denote by $\psi_{L/K}\:\Val_L\to\Val_K$ the induced map. By Lemma~\ref{valmaps}, it is continuous and open.

\subsection{Tame distillation for fields}\label{tamefldsec}

\subsubsection{Tame locus}\label{tamelocussec}
If $L/K$ is an algebraic extension of fields it makes sense to study its tameness with respect to various valuations of $L$. So, we introduce the following terminology. The extension $L/K$ is {\em $\gtx$-tame} for $\gtx\in\Val_L$ if $L/K$ is tame with respect to the valuations of $L$ and $K$ corresponding to $\gtx$ and $\psi_{L/K}(\gtx)$, respectively. The set $\Loc^t_{L/K}$ of all points $\gtx\in\Val_L$ such that $L/K$ is $\gtx$-tame will be called the {\em tame locus} of $L/K$. The complement $\Loc^w_{L/K}=\Val_L\setminus \Loc^t_{L/K}$ will be called the {\em wild locus} of $L/K$.

\subsubsection{Absolute tameness}
If $\Loc^t_{L/K}=\Val_L$ then we say that $L/K$ is {\em absolutely tame}. A field $K$ is {\em absolutely tame } if so is the extension $K^a/K$. For example, it follows from Theorem~\ref{tameth} that any $p$-closed field $K$ of characteristic $p$ is absolutely tame.

\subsubsection{$S$-tameness}\label{Stame}
Let $S\subseteq\Val_L$ be a subset. We say that $L/K$ is {\em $S$-tame} if $S\subseteq \Loc^t_{L/K}$. The main case when we will use this is when $S=\Val_L(X)'$ for a pointed scheme $\Spec(L)\to X$. In this case, we will simply say that $L/K$ is {\em $X$-tame}. Note that if $(\eta_L\to X)\to(\eta_L\to Y)$ is a modification of pointed schemes then $X$-tameness of $L/K$ is equivalent to $Y$-tameness.

\subsubsection{$\calP$-tameness}\label{Ptame}
For any subset $S\subseteq\Val_L$ we define its {\em characteristic} $\cha(S)$ to be the set of non-zero residue characteristics of the elements of $S$. Given a set of primes $\calP$, we say that an algebraic extension $L/K$ is {\em $\calP$-tame} if $\cha(\Loc^w_{L/K})\cap\calP=\emptyset$. In fact, $\calP$-tameness is equivalent to $\Spec(\bfZ_{\calP'})$-tameness, where $\bfZ_{\calP'}$ is obtained from $\bfZ$ by inverting the primes of $\calP'$. In the same way, we say that $K$ is {\em $\calP$-tame} if the extension $K^a/K$ is so. Again, Theorem~\ref{tameth} immediately implies the following result that generalizes our earlier observation on $p$-closed fields:

\begin{lem}\label{Ptamelem}
Any $\calP$-closed field $K$ is $\calP$-tame.
\end{lem}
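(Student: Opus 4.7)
The plan is a direct reduction to Theorem~\ref{tameth}. Unwinding the definitions, I must show that no valuation $\gtx \in \Loc^w_{K^a/K}$ has residue characteristic lying in $\calP$. So I would fix such a $\gtx$ with nonzero residue characteristic $p$ and argue by contradiction, assuming $p \in \calP$.

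Restricting the valuation ring of $\gtx \subseteq K^a$ to $K$ endows $K$ with a valued field structure whose residue characteristic exponent is $p$. Since any nontrivial $p$-extension of $K$ is in particular a nontrivial $\calP$-extension, the hypothesis that $K$ is $\calP$-closed forces $K$ to be $p$-closed. Theorem~\ref{tameth} then applies and yields that the valued field $K$ is tame, i.e.\ $K^a = K^t$. Consequently every algebraic extension of $K$ is tame with respect to every extension of the valuation, and in particular $K^a/K$ is tame at $\gtx$, contradicting $\gtx \in \Loc^w_{K^a/K}$.

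There is no real obstacle once Theorem~\ref{tameth} is granted; the only point worth pinning down is the passage from ``$K$ is a tame valued field'' to ``$K^a/K$ is $\gtx$-tame'', which is immediate from the formalism of Section~\ref{nonhenssec}: any algebraic extension of a tame field $K$ (in the sense $K^a = K^t$) is a tame extension of valued fields. The other minor verification is the implication $\calP$-closed $\implies$ $p$-closed for each $p \in \calP$, which is trivial since $p$-extensions have $\calP$-power degrees.
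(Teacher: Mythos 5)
Your proof is correct and is exactly the intended argument: the paper itself gives no written proof, stating only that the lemma follows immediately from Theorem~\ref{tameth}, and your unwinding (fix $\gtx$ with residue characteristic $p\in\calP$, note $\calP$-closed $\Rightarrow$ $p$-closed, apply Theorem~\ref{tameth} to conclude $K^a=K^t$ for that valuation) is precisely the implicit reasoning.
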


\begin{rem}
If $K$ is $\calP$-closed it may freely happen that an algebraic (and even finite) extension $L$ of $K$ is not $\calP$-closed. Nevertheless, $L$ is $\calP$-tame because $K$ is $\calP$-tame and subextensions of tame extensions are tame.
\end{rem}

\subsubsection{Unramified locus}
Replacing ``tame" with ``unramified" in Sections \ref{tamelocussec}--\ref{Stame} one obtains definitions of {\em $S$-unramified} extensions and {\em unramified loci}. The latter will be denoted $\Loc^u_{L/K}$.

\begin{lem}\label{tamelocus}
(i) If $L/l/k$ is a tower of algebraic extensions then $$\Loc^t_{L/k}=\Loc^t_{L/l}\cap\psi_{L/l}^{-1}(\Loc^t_{l/k})\ \ \ {\rm and}\ \ \ \Loc^u_{L/k}=\Loc^u_{L/l}\cap\psi_{L/l}^{-1}(\Loc^u_{l/k}).$$

(ii) If $L/k$ is an algebraic extension with subextensions $l/k$ and $K/k$ such that $L=Kl$ then
$$\psi_{L/l}^{-1}(\Loc^t_{l/k})\subseteq \Loc^t_{L/K}\ \ \ {\rm and}\ \ \ \psi_{L/l}^{-1}(\Loc^u_{l/k})\subseteq \Loc^u_{L/K}.$$
\end{lem}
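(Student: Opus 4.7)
The plan is to treat both parts pointwise. For a fixed $\gtx\in\Val_L$, the valuation $v=v_\gtx$ restricts compatibly to $L$, $l$, $k$ along $\psi_{L/l}$ and $\psi_{L/k}=\psi_{l/k}\circ\psi_{L/l}$, so $L/l/k$ becomes a tower of valued field extensions. Part (i) will then be immediate from transitivity of the tame and unramified properties in towers of valued field extensions (part (i) of the lemma opening \S\ref{panksec}): $L/k$ is tame (resp.\ unramified) at $v$ if and only if both $L/l$ and $l/k$ are so at the restricted valuations, which is precisely what the two displayed equalities say after translating ``restricted valuation'' into the pull-back along $\psi_{L/l}$.

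For (ii), fix $\gtx\in\psi_{L/l}^{-1}(\Loc^t_{l/k})$. Since tameness of $L/K$ at $\gtx$ is detected on finite subextensions of $L/K$, and any such subextension is contained in $Kl'/K$ for some finite $l'/k$ inside $l$, I would first reduce to the case where $l/k$ is finite. I would then pass to henselizations inside a fixed algebraic closure of $L$ with some extension of $v$. A quick compositum-and-minimality argument shows that $L^h=K^h\cdot l^h$: the compositum $K^hl^h$ is a finite, hence henselian, extension of $K^h$ containing $L=Kl$, so it contains $L^h$; conversely $L^h$ contains $K^h$ and $l^h$ by minimality of henselization. Since $l^h/k^h$ is tame, $l^h\subseteq k^{h,t}$, and hence $L^h\subseteq K^h\cdot k^{h,t}$. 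It will then suffice to prove that $K^h\cdot k^{h,t}/K^h$ is tame, as tameness passes to subextensions.

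The heart of the argument will therefore be the base-change property for tame extensions of henselian valued fields: writing $k^{h,t}$ as a filtered union of finite tame extensions $m/k^h$ and using the filtered-union property of tameness, this reduces to showing that each compositum $K^hm/K^h$ is tame. This is the classical fact that the invariants characterizing tameness of a finite extension of henselian valued fields (trivial defect, ramification index prime to the residue characteristic, separable residue extension) are stable under base change to a larger henselian valued field. The unramified case of (ii) goes through in parallel with $k^{h,u}$ in place of $k^{h,t}$, and can alternatively be reduced to Lemma~\ref{unramlem} together with stability of \'etale algebras under base change. The main obstacle I foresee is precisely this base-change property in the henselian case; once it is granted, the rest of both arguments is formal.
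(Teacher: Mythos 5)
Your argument is correct and follows the same route as the paper: part (i) reduces to transitivity of tameness and unramifiedness in towers, and part (ii) reduces to the fact that if $l/k$ is tame (resp.\ unramified) then $lK/K$ is tame (resp.\ unramified). The paper simply cites these two facts, whereas you spell out the second one by passing to henselizations (with a compatible choice of extensions), observing $L^h=K^h l^h$, and invoking base-change stability of tame/unramified extensions of henselian valued fields together with the filtered-union property.
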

\begin{proof}
The first claim follows from the fact that an extension of valued fields $L/k$ is tame (resp. unramified) if and only if both $L/l$ and $l/k$ are tame (resp. unramified). The second claim follows from the fact that if $l/k$ is tame (resp. unramified) then $lK/K$ is tame (resp. unramified).
\end{proof}

\subsubsection{Openness of the loci}
We have the following openness result for finite extensions:

\begin{lem}\label{openlocus}
For any finite extension $L/K$ both the tame locus and the unramified locus are open subsets of $\Val_L$.
\end{lem}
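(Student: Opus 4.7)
The plan is to construct, for each $\gtx$ in the relevant locus, an explicit open neighborhood of $\gtx$ in $\Val_L$ contained in that locus. Throughout, write $A = \calO_{\psi_{L/K}(\gtx)}$ and $B = \calO_\gtx$.

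For the unramified case, Lemma~\ref{unramlem} tells us that $B/A$ is essentially \'etale, so by the standard structure theorem for \'etale morphisms I can choose a primitive element $a \in B$ with $L = K(a)$ together with a monic polynomial $g(T) \in A[T]$ such that $g(a) = 0$ and $g'(a) \in B^\times$. Since $A = \colim_{X_0} \calO_{X_0,x}$, where $X_0$ ranges over proper integral $\bfZ$-models of $K$ and $x$ is the center of $\psi_{L/K}(\gtx)$, the coefficients of $g$ spread out to some $\calO_{X_0,x}$ and extend over a Zariski-open neighborhood $V \subseteq X_0$ of $x$. I then take
\[
U = \psi_{L/K}^{-1}\bigl(\pi_{X_0}^{-1}(V)\bigr) \cap \{\gtx' : a \in \calO_{\gtx'}\} \cap \{\gtx' : g'(a) \in \calO_{\gtx'}^\times\},
\]
which is open by continuity of $\psi_{L/K}$ (Lemma~\ref{valmaps}) together with the definition of the Zariski topology on $\Val_L$, and which contains $\gtx$. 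For $\gtx'\in U$, write $A' = \calO_{\psi_{L/K}(\gtx')}$ and $B' = \calO_{\gtx'}$: the conditions force $g \in A'[T]$, $A'[a] \subseteq B'$ and $g'(a)\notin \gtm_{B'}$, so the localization $A'[a]_{m'}$ at $m' = \gtm_{B'}\cap A'[a]$ is essentially \'etale over $A'$. Using Lemma~\ref{extlem}, $\Nor_L(A')$ is a Pr\"ufer ring, and hence the essentially \'etale local $A'$-subalgebra $A'[a]_{m'}$ of $L$ is itself a valuation ring; being dominated by $B'$ and sharing its fraction field $L$, it coincides with $B'$. Then $B'/A'$ is essentially \'etale and Lemma~\ref{unramlem} yields unramifiedness at $\gtx'$.

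For the tame case I would proceed analogously, combining the structural description of finite tame extensions of henselian valued fields with the decompletion arguments of Section~\ref{htonesec} to transfer generators from henselizations back to $L$ and $K$: one writes $L = K(a, b_1,\dots, b_r)$ where $K(a)/K$ is unramified at $\gtx$ (handled by the argument above) and each $b_i$ satisfies $b_i^{n_i} = c_i \in K(a,b_1,\dots,b_{i-1})$ with $(n_i,p) = 1$ for $p$ the residue characteristic of $\gtx$. I would enlarge $U$ by the further open conditions $b_i \in \calO_{\gtx'}$ and $n_i^{-1}\in \calO_{\gtx'}$ (jointly giving $v'(n_i)=0$, i.e., $n_i$ a unit in the residue field of $\gtx'$), so that for $\gtx'\in U$ the subtower $K(a)/K$ is unramified at the restriction of $\gtx'$ and each Kummer step $K(a,b_1,\dots,b_i)/K(a,b_1,\dots,b_{i-1})$ is tame at its induced valuation since its exponent is coprime to the residue characteristic there; transitivity of tameness then gives that $L/K$ is tame at every $\gtx'\in U$.

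The hardest part will be the tame case: pulling the henselian structure theorem for tame extensions back to the non-henselian setting requires the splitting and decompletion results of Section~\ref{htonesec}, and one must check that all the needed generators and defining relations can indeed be chosen inside $L$ and $K$ themselves. The unramified case, by contrast, is routine once Lemma~\ref{unramlem} is combined with the standard \'etale structure theorem and the Pr\"ufer property established in Lemma~\ref{extlem}.
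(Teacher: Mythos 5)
Your unramified argument is correct and essentially identical to the paper's: both use the \'etale structure theorem (Chevalley) to produce a monic $g$ and primitive element $a$ with $g'(a)$ a unit, pose the open conditions on the coefficients, on $a$, and on $g'(a)$, and then use Lemma~\ref{extlem} to conclude that the resulting essentially \'etale local subring must coincide with $\calO_{\gtx'}$. Your detour through $\bfZ$-models to spread out the coefficients is a more verbose way of saying ``coefficients of $g$ lie in $\calO'\cap K$'', but it is the same argument.

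The tame case, however, has a genuine gap. You propose to write $L = K(a, b_1,\dots,b_r)$ with $K(a)/K$ unramified at $\gtx$ and each $b_i$ a Kummer generator. This decomposition does not exist inside $L$ and $K$ in general: the structure theorem for tame extensions you are invoking gives $L^u = K^u(x_1^{1/n_1},\dots,x_r^{1/n_r})$, i.e.\ the Kummer generators live over the maximal \emph{unramified} extension $K^u$ (an infinite extension whose residue field is separably closed), not over $K$ itself. You cannot expect to find such generators in $L$: already $\mu_{n_i}\subset K^u$ but typically $\mu_{n_i}\not\subset K$, so the Kummer-theoretic description is simply unavailable at the level of $L/K$. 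Moreover, the decompletion machinery of Section~\ref{htonesec} that you hope to invoke to ``pull generators back'' is only proved for valuations of height one and produces \emph{some} henselian extension, not elements of the prescribed field $L$, so it does not repair this. The paper's proof avoids the problem entirely: it introduces \emph{finite} auxiliary unramified extensions $L'/L$ and $K'/K$ on which the Kummer presentation $L'=K'(x_1^{1/n_1},\dots,x_r^{1/n_r})$ does hold, observes that the tame locus of $L'/K'$ contains the (open) set of valuations with residue characteristic prime to $n_1\cdots n_r$, combines this with openness of $\Loc^u_{K'/K}$ from the unramified case via Lemma~\ref{tamelocus}(i), and finally descends to $\Val_L$ using the fact that $\psi_{L'/L}$ is an open map (Lemma~\ref{valmaps}) and $\psi_{L'/L}(\Loc^t_{L'/K})\subseteq\Loc^t_{L/K}$. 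That descent step — enlarging the extension and pushing the neighborhood down along an open map — is the essential idea your proposal is missing.
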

\begin{proof}
We consider the unramified locus $\Loc^u_{L/K}\subseteq\Val_L$ first. Choose a point $\gty\in\Loc^u_{L/K}$, set $\gtx=\psi_{L/K}(\gty)$, and consider the corresponding valuation rings $\calO_\gty$ and $\calO_\gtx$ of $L$ and $K$, respectively. By our assumption, $\calO_\gty/\calO_\gtx$ is essentially \'etale, hence by Chevalley's theorem \cite[$\rm IV_4$, Theorem~18.4.6(ii)]{ega}, $\calO_\gty$ is a localization of a ring $\calO_\gtx[t]/(f(t))$ such that $f(t)$ is monic and if $a\in\calO_\gty$ denotes the image of $t$ then $f'(a)\in\calO_\gty^\times$. Let $V$ be the set of valuation rings $\calO$ of $L$ such that the coefficients of $f$ lie in $\calO\cap K$, $a\in\calO$ and $f'(a)\in\calO^\times$. Clearly, $V$ is an open neighborhood of $\gty$ and we claim that $V\subseteq \Loc^u_{L/K}$. In other words, if $\gty'\in V$ and $\gtx'=\psi_{L/K}(\gty')$ then $\calO_{\gty'}/\calO_{\gtx'}$ is essentially \'etale.

To prove the claim note that $f(t)\in\calO_{\gtx'}[t]$ and $A=\calO_{\gtx'}[t,f'(t)^{-1}]/(f(t))$ is essentially \'etale over $\calO_{\gtx'}$ by Chevalley's theorem. We claim that the homomorphism $A\to L$ taking $t$ to $a$ is injective. Indeed, it suffices to prove that $f(t)$ generates the kernel of the homomorphism $\calO_{\gtx'}[t]\to L$ taking $t$ to $a$. The latter follows from the facts that $\calO_{\gtx'}$ is integrally closed and $f(t)$ is the minimal polynomial of $a$ over $K$. Note that $A\into L$ factors through $\calO_{\gty'}$ by the very definition of $V$, and hence $\calO_{\gtx'}\to\calO_{\gty'}$ factors through a local homomorphism $\phi\:A_m\to\calO_{\gty'}$, where $m=A\cap m_{\gty'}$. Being $\calO_{\gtx'}$-\'etale, $A$ is integrally closed, and hence $A_m$ is a valuation ring by Lemma~\ref{extlem}. Since $\calO_{\gty'}$ is a local $A_m$-algebra with the same fraction field, $\phi$ is an isomorphism, and we obtain that $\calO_{\gty'}/\calO_{\gtx'}$ is essentially \'etale.

Assume, now, that $\gty$ lies in the tame locus $\Loc^t_{L/K}$. Provide $L$ and $K$ with the valuations corresponding to $\gty$ and $\gtx=\psi_{L/K}(\gty)$, respectively. Since the extension $L^u/K^u$ is tame, it is generated by elements $x_i^{1/n_i}$ with $n_i\in\tilK^\times$, and hence there exist finite unramified extensions $L'/L$ and $K'/K$ such that $L'=K'(x_1^{1/n_1}\..x_r^{1/n_r})$. Let $\gty'\in\Val_{L'}$ and $\gtx'\in\Val_{K'}$ be the corresponding points. The tame locus $\Loc^t_{L'/K'}$ contains all valuations with residue characteristic prime to $n_1\..n_r$, hence it is a neighborhood of $\gty'$. By the first part of the theorem, $\Loc^u_{K'/K}$ is a neighborhood of $\gtx'$. Since $\Loc^t_{L'/K'}\cap\psi_{L'/K'}^{-1}(\Loc^u_{K'/K})\subseteq \Loc^t_{L'/K}$ by Lemma~\ref{tamelocus}(i), $\Loc^t_{L'/K}$ is a neighborhood of $\gty'$. In addition, $\psi_{L'/L}(\Loc^t_{L'/K})\subseteq \Loc^t_{L/K}$ by Lemma~\ref{tamelocus}(i), and using that $\psi_{L'/L}$ is an open map by Lemma~\ref{valmaps}, we obtain that $\Loc^t_{L/K}$ is a neighborhood of $\gty$.
\end{proof}

\subsubsection{Distillation of finite extensions}
Recall that the wild locus $\Loc^w_{L/K}$ was defined in \S\ref{tamelocussec}. More generally, for any subset $S\subseteq\Val_L$ we will use the notation $S^w_{L/K}=S\cap \Loc^w_{L/K}$, and the set $\cha(S^w_{L/K})$ will be called the {\em wild primes set} of $S$ with respect to $K$. For example, if $\cha(K)=p>0$ then $\cha(\Loc^w_{L/K})\subseteq\{p\}$, and if $L/K$ is finite then $\cha(\Loc^w_{L/K})$ is contained in the set of all primes not exceeding $[L:K]$.

\begin{theor}\label{extdistth}
Assume that $L/K$ is a finite field extension, $S$ is a subset of $\Val_L$, which is compact in the constructible topology, and $\calP=\cha(S^w_{L/K})$ is the wild primes set of $S$. Then there exists a finite extension $L'/L$ and a field tower $L'/K'/K$ such that $K'/K$ is a $\calP$-extension and $L'/K'$ is an $S'$-tame extension, where $S'=\psi_{L'/L}^{-1}(S)$. Moreover, if $\oK/K$ is an arbitrary $\calP$-maximal extension then one can achieve that $K'\subseteq\oK$ and $L'=LK'$.
\end{theor}
\begin{proof}
Fix a maximal $\calP$-extension $\oK/K$, choose a composite extension $\oL=L\oK$, and set $\oS=\psi_{\oL/L}^{-1}(S)$. The map $\psi_{\oL/L}$ is compact in the constructible topology, hence $\oS$ is compact in the constructible topology and quasi-compact in the Zariski topology.

In the sequel, all fields are subfields of $\oL$ and the composite extensions are taken inside of $\oL$. We claim that the extension $\oL/\oK$ is $\oS$-tame. Indeed, $\psi_{\oL/L}^{-1}(\Loc^t_{L/K})\subseteq \Loc^t_{\oL/\oK}$ by Lemma~\ref{tamelocus}(ii), hence $\oS^w_{\oL/\oK}\subseteq\psi^{-1}(S^w_{L/K})$ and $\cha(\oS^w_{\oL/\oK})\subseteq\calP$. However, $\oK$ is $\calP$-closed and hence $\oL/\oK$ is $\calP$-tame by Lemma~\ref{Ptamelem}. This proves that $\cha(\oS^w_{\oL/\oK})\cap\calP=\emptyset$ and we obtain that $\oS^w_{\oL/\oK}=\emptyset$.

Given a point $\gtx\in\oS$ provide $\oL$ and its subfields with the corresponding valuation. Since $\oL=L\oK$ is tame over $\oK$, there exists a finite subextension $K_\gtx/K$ of $\oK/K$ such that $L_\gtx=LK_\gtx$ is tame over $K_\gtx$. The tame locus $\Loc^t_{L_\gtx/K_\gtx}$ is open by Lemma~\ref{openlocus}, hence its preimage in $\Val_\oL$ is an open neighborhood of $\gtx$. By the quasi-compactness of $\oS$ there exist finitely many points $\gtx_1\..\gtx_n$ such that $\oS\subseteq\cup_{i=1}^n\psi^{-1}_{\oL/L_i}(\Loc^t_{L_i/K_i})$, where we set $L_i=L_{\gtx_i}$ and $K_i=K_{\gtx_i}$. Let $K'$ be the composite of $K_1\.. K_n$ and set $L'=LK'$ and $S'=\psi_{L'/L}^{-1}(S)$. By Lemma~\ref{tamelocus}(ii), $\psi_{L'/L_i}^{-1}(\Loc^t_{L_i/K_i})\subseteq \Loc^t_{L'/K'}$ and hence $\oS\subseteq\psi^{-1}_{\oL/L'}(\Loc^t_{L'/K'})$. Therefore $S'=\psi_{\oL/L'}(\oS)$ lies in $\Loc^t_{L'/K'}$, that is, $L'/K'$ is $S'$-tame.
\end{proof}

\subsection{Tame distillation for alterations}

\subsubsection{Galois alterations}
Following de Jong we say that an alteration $f\:Y\to X$ is {\em Galois} if $Y^G\to X$ is generically radicial, where $G=\Aut_X(Y)$. This happens if and only if $k(Y)/k(X)$ factors into a composition of a purely inseparable extension $K/k(X)$ and a Galois extension $k(Y)/K$ such that the inclusion $\Aut_X(Y)\into G_{k(Y)/K}$ is an equality. If, in addition, $f$ is generically \'etale then we call it a {\em separable Galois alteration}.

\subsubsection{Tame actions}
Assume that a finite group $G$ acts on a scheme $X$. For any point $x\in X$ let $G_x$ denote the stabilizer of $x$, i.e. the group of elements $g\in G$ that preserve $x$ and act trivially on $k(x)$. Recall that the action is {\em tame} if for each $x$ the order of $G_x$ is invertible in $k(x)$.


\subsubsection{Tame Galois alterations}\label{tamealtsec}
In the sequel, by a {\em Galois covering} we mean a finite separable Galois alteration. A separable Galois alteration (resp. a Galois covering) $f\:Y\to X$ is called {\em tame} if the action of $\Aut_X(Y)$ on $Y$ is tame. In the following lemma, given an integral scheme $X$ and a finite field extension $K/k(X)$, by $\Nor_K(X)$ we denote the normalization of $X$ in $K$, that is, if $X$ is covered by open subschemes $\Spec(A_i)$ then $\Nor_K(X)$ is glued from the spectra of the integral closures of $A_i$ in $K$.

\begin{lem}\label{tamealtlem}
Assume that $X$ is a noetherian universally Japanese integral scheme and $L/k(X)$ is an $X$-tame Galois field extension. Then there exists a modification $X'\to X$ such that $Y'=\Nor_{L}(X')\to X'$ is a tame Galois covering.
\end{lem}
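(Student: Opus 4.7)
The plan is to construct a $G$-equivariant modification of $Y_0 := \Nor_L(X)$ of the form $Y' = \Nor_L(X')$ (where $G = \Gal(L/k(X))$) such that every stabilizer $G_{y'}$ with $y' \in Y'$ has order invertible in $k(y')$. I would work on the Riemann-Zariski space $\Val_L(Y_0)$, which carries a $G$-action, and exploit its quasi-compactness (Lemma~\ref{rzlem}(i)) to assemble local modifications into a single global one.

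For each $\gty \in \Val_L(Y_0)$ with valuation ring $\calO_v \subseteq L$, the $X$-tameness hypothesis implies that the inertia $I_v := G_\gty = \{g \in G : g(\calO_v) = \calO_v \text{ and } g \text{ acts trivially on } \kappa(v)\}$ has order invertible in $\kappa(v)$. The first step is to realize $I_v$ as the scheme-theoretic stabilizer at a concrete point on an enlargement of $Y_0$. For each $g \in G \setminus I_v$ pick $a_g \in L$ witnessing $g \notin I_v$: either $a_g \in \calO_v$ with $g(a_g) \notin \calO_v$ when $g$ is outside the decomposition group, or $a_g \in \calO_v$ with the residues of $a_g$ and $g(a_g)$ distinct in $\kappa(v)$ otherwise. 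Since $G$ is finite, only finitely many $a_g$'s are needed, and a $G$-equivariant modification $Y_\gty = \Nor_L(X_\gty) \to Y_0$, obtained by blowing up $X$ so that the $G$-orbits of these $a_g$'s lie in the stalk at the center $y_\gty$ of $\gty$, realizes $G_{y_\gty} = I_v$.

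Next comes the technical heart, an openness statement. Define
\[
V_\gty := \{y \in Y_\gty : G_y \subseteq I_v\} \cap \{y \in Y_\gty : |I_v| \in k(y)^\times\} \subseteq Y_\gty.
\]
The second condition defines an open subset as the complement of $V(|I_v|)$. For the first, for each $g \in G$ the locus $\{y : g \in G_y\}$ coincides with $V(I_g)$, where $I_g \subseteq \calO_{Y_\gty}$ is the ideal sheaf generated locally on $G$-stable affine opens by $\{g(a) - a\}$ (since $g$ fixes $y$ and acts trivially on $k(y)$ precisely when the prime $\gtp_y$ contains $I_g$); hence $\{y : g \in G_y\}$ is closed, and intersecting the complements over the finite set $G \setminus I_v$ gives openness of $V_\gty$. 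By construction $y_\gty \in V_\gty$. Letting $U_\gty \subseteq \Val_L(Y_0)$ denote the preimage of $V_\gty$ under the continuous projection to $Y_\gty$, one obtains an open neighborhood of $\gty$ such that any $\gty' \in U_\gty$ with center $y'$ on $Y_\gty$ satisfies $|G_{y'}| \mid |I_v|$ and $|I_v| \in k(y')^\times$.

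Finally, by quasi-compactness of $\Val_L(Y_0)$ one extracts a finite subcover $U_{\gty_1}, \ldots, U_{\gty_n}$, and since modifications of $X$ form a filtered system one finds $X' \to X$ dominating each $X_{\gty_i}$; then $Y' := \Nor_L(X')$ dominates each $Y_{\gty_i}$. For any $y' \in Y'$, pick a valuation $\gty' \in \Val_L(Y_0)$ centering at $y'$ via the valuative criterion of properness; it lies in some $U_{\gty_i}$, so the center $y_i$ of $\gty'$ on $Y_{\gty_i}$ satisfies $|G_{y_i}| \in k(y_i)^\times$. The equivariant morphism $Y' \to Y_{\gty_i}$ forces $G_{y'} \subseteq G_{y_i}$, and the inclusion $k(y_i) \hookrightarrow k(y')$ preserves residue characteristic, giving $|G_{y'}| \in k(y')^\times$ and hence tameness of the $G$-action on $Y'$. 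The principal obstacle is establishing the openness in the third paragraph, which hinges on identifying the residue-field-triviality condition $g \in G_y$ with the vanishing of the concrete ideal $I_g$ and on preserving $G$-equivariance throughout the normalization and blow-up constructions.
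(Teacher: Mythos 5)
Your proof is correct and follows essentially the same route as the paper's: both rest on the quasi-compactness of $\Val_L(Y_0)$ together with the fact that, on each modification $Y_i=\Nor_L(X_i)$, the locus of points with tame stabilizer is open (the paper phrases this dually as closedness of the non-tame locus $T_i$, using the fixed-point subschemes $Y_i^H$ and the fibers $Y_i(p)$, which matches your ideal-theoretic description of $\{y: g\in G_y\}$). The only cosmetic difference is that you extract a finite subcover and explicitly construct, for each valuation $\gty$, a modification $Y_\gty$ realizing $G_{y_\gty}=I_v$, whereas the paper argues by contradiction via the finite intersection property and bypasses that construction by observing that the decreasing chain of stabilizers $G_{y_j}$ in the finite group $G$ must stabilize.
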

\begin{proof}
Set $G=G_{L/k(X)}$. Let $\{X_i\}$ be the family of all modifications of $X$ and set $Y_i=\Nor_{L}(X_i)$. Note that each $Y_i\to X_i$ is a $G$-Galois covering and let $T_i$ be the set of points of $Y_i$ at which the action of $G$ is not tame. If $X_j\to X$ factors through $X_i$ then the induced morphism $Y_j\to Y_i$ is $G$-equivariant, and hence $T_j$ is contained in the preimage of $T_i$.

We claim that $T_i$ is closed. Since $Y_i$ can be covered by separated (even affine) $G$-equivariant open subschemes, the preimages of the elements of an open affine covering of $X_i$, it suffices to consider the case when $Y_i$ is separated. For a subgroup $H\subseteq G$ let $Y_i^H$ denote the subscheme of points fixed by $H$. It is the intersection of all graphs of translations $h\:Y_i\to Y_i$ with $h\in H$, hence a closed subscheme of $Y_i$. For a prime $p$ let $Y_i(p)$ be the fiber of $Y_i$ over $(p)\in\Spec(\bfZ)$. Then $T_i$ is the union of closed subschemes $Y_i^H\cap Y_i(p)$, where $H\subseteq G$ is a subgroup and $p$ is a prime dividing $|H|$. There are finitely many such pairs $(H,p)$, hence $T_i$ is closed.

We will show by a contradiction that there exists $Y'=Y_j$ such that $T_j=\emptyset$, as required. So, assume that no such $Y_j$ exists. If $Y=\Nor_L(X)$ then the preimage of each $T_j$ is a non-empty closed subset of $\RZ_{L}(Y)$, and by the quasi-compactness of the latter there exists a point $\gty\in\RZ_{L}(Y)$ such that for each $j$ the center $y_j\in Y_j$ of $\gty$ lies in $T_j$. Since $G$ is finite, the groups $G_j=G_{y_j}$ stabilize and we set $G'=\lim_j G_{y_j}$. By our assumption, $p=\cha(k(y_j))$ divides the order of $G'$.

The group $G'$ acts on $\calO_\gty=\colim_j\calO_{y_j}$. The extension $L/k(X)$ is $X$-tame, in particular, it is $\gty$-tame. Thus, the action of $G'$ on $\Spec(\calO_\gty)$ is tame and hence the order of $G_\gty$ is invertible in the residue field $k(\gty)$ of $\calO_\gty$, where $G_\gty$ denotes the subgroup of $G'$ that acts trivially on $k(\gty)$. But $k(\gty)=\colim_j k(y_j)$, and hence $G_\gty=G'$ has order divisible by $p$. The contradiction shows that already the action on some $Y_j$ is tame.
\end{proof}

\subsubsection{The distillation theorem}
For an alteration $f\:Y\to X$ let $\calP^w_{Y/X}$ denote the wild primes set of $\Val_{k(Y)}(X)'$ with respect to $k(X)$. In particular, $\calP^w_{Y/X}$ lies in $\cha(X)$ and its elements are bounded by the degree of $f$.

\begin{theor}\label{altdistth}
Assume that $X$ is a noetherian universally Japanese integral scheme and $Y\to X$ is an alteration, and set $\calP=\calP^w_{Y/X}$. Then there exists an alteration $Y'\to Y$ such that the composed alteration $Y'\to X$ factors into a composition of a tame Galois covering $Y'\to X'$ and a $\calP$-alteration $X'\to X$.
\end{theor}
\begin{proof}
Set $K=k(X)$ and $L=k(Y)$. It is classical that for a fine enough modification $X_1\to X$ one has that $Y_1=\Nor_L(X_1)$ is a modification of $Y$. For example, by the flattening theorem \cite[Th\'eor\`eme~5.7.9]{RG}, there exists a blow up $X_1\to X$ such that the strict transform $Y^\st\to X_1$ of $Y\to X$ is flat. Thus, $Y^\st$ is a modification of $Y$ which is a flat alteration of $X_1$, hence $Y^\st\to X_1$ is finite and $Y_1:=\Nor_L(X_1)=\Nor(Y^\st)$ is a modification of $Y$. So, replacing $X$ and $Y$ with their modifications $X_1$ and $Y_1$ we can assume that $Y=\Nor_L(X)$.

By Theorem~\ref{extdistth} there exists a finite extension $L'/L$ and a field tower $L'/K'/K$ such that $L'/K'$ is $X$-tame and $K'/K$ is a $\calP$-extension. Let $L''$ be the Galois closure of $L'/K'$. By Lemma~\ref{closurelem} below, $L''/K'$ is $X$-tame hence replacing $L'$  by $L''$ we can also achieve that $L'/K'$ is Galois. Since $L'/K'$ is $\Nor_{K'}(X)$-tame, Lemma~\ref{tamealtlem} implies that there exists a modification $X'\to\Nor_{K'}(X)$ such that $Y'=\Nor_{L'}(X')\to X'$ is a tame Galois covering. It remains to note that $Y'\to X$ factors through $Y=\Nor_L(X)$ and hence $Y'\to X'\to X$ satisfy all assertions of the theorem.
\end{proof}

It remains to pay off our debt.

\begin{lem}\label{closurelem}
Let $L/K$ be a finite separable field extension with Galois closure $L'/K$. Let $T\subseteq\Val_K$ be a subset and set $S=\psi^{-1}_{L/K}(T)$ and $S'=\psi^{-1}_{L'/K}(T)$. Then $L/K$ is $S$-tame if and only if $L'/K$ is $S'$-tame.
\end{lem}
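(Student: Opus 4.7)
My plan is to prove the two implications separately, with $(\Leftarrow)$ being a one-liner and the work concentrated in $(\Rightarrow)$.

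For $(\Leftarrow)$: given $\gty \in S$, lift it to any $\gty' \in \Val_{L'}$ extending $\gty$; since $\gty'$ restricts to $\psi_{L/K}(\gty) \in T$ on $K$, automatically $\gty' \in S'$. The hypothesis gives $\gty'$-tameness of $L'/K$, and because the class of tame extensions of valued fields is closed under subextensions, $L/K$ is $\gty$-tame.

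For $(\Rightarrow)$: fix $\gty' \in S'$ and set $\gtx = \psi_{L'/K}(\gty') \in T$. I would fix a henselization $K^h$ at $\gtx$ inside an algebraic closure $\overline{K^h}$ and choose a $K$-embedding $L' \hookrightarrow \overline{K^h}$ so that $\gty'$ is the restriction of the unique valuation on $\overline{K^h}$ extending that of $K^h$. Because $L'/K$ is the Galois closure of $L/K$, the field $L'$ is the compositum inside $\overline{K^h}$ of the images of the finitely many $K$-embeddings $\sigma_1, \ldots, \sigma_n \colon L \to L'$. For each $i$, the restriction of the $\overline{K^h}$-valuation to $\sigma_i(L)$ pulls back via $\sigma_i$ to a valuation $\gty_i \in \Val_L$ above $\gtx$, hence $\gty_i \in S$, and by hypothesis $L/K$ is $\gty_i$-tame. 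Viewing $\sigma_i(L) \cdot K^h \subseteq \overline{K^h}$ as the factor of $\sigma_i(L) \otimes_K K^h$ singled out by our embedding, Lemma~\ref{henslem} identifies it with the henselization of $\sigma_i(L)$ at $\gty'|_{\sigma_i(L)}$, so $\gty_i$-tameness translates to $\sigma_i(L) \cdot K^h \subseteq (K^h)^t$.

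Taking the compositum over $i$ inside $\overline{K^h}$ and using the same identification for $L'$ itself, I would conclude
\[
(L')^h \;=\; L' \cdot K^h \;=\; \prod_{i=1}^{n} \sigma_i(L) \cdot K^h \;\subseteq\; (K^h)^t,
\]
which says exactly that $L'/K$ is $\gty'$-tame. The main technical point requiring care is the identification $F \cdot K^h = F^h$ inside $\overline{K^h}$ for $F \in \{\sigma_i(L), L'\}$ (with the valuation induced by the ambient embedding): this is a direct consequence of Lemma~\ref{henslem}, since $F \otimes_K K^h$ decomposes as the product of henselizations of $F$ at the various extensions of $\gtx$ and the chosen embedding selects exactly one factor. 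No deeper ingredient beyond this and the definition of tameness is needed.
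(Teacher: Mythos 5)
Your proof is correct and follows essentially the same route as the paper: the hard direction is handled by writing $L'$ as the compositum of the $K$-conjugates $\sigma_i(L)$ of $L$, observing that each conjugate carries a valuation lying in $S$ and is therefore tame over $K$, and then using that a compositum of tame extensions is tame. The paper states this last step directly, whereas you unpack it by passing to henselizations via Lemma~\ref{henslem}; this is just making explicit what the paper uses implicitly.
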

\begin{proof}
The direct implication is obvious, so let us prove the inverse one. Let $L_1\..L_n$ denote the subfields of $L'$ that are $K$-isomorphic to $L$. Provide $L'$ with a valuation of $S'$. The induced valuation on $K$ lies in $T$, hence the induced valuations on $L_i$ lie in $S'$ with respect to any choice of a $K$-isomorphism $L_i=L$. In particular, each extension $L_i/K$ is tame and since $L$ is the composite of $L_1\..L_n$, the extension $L/K$ is tame too.
\end{proof}

\section{$\cha(X)$-altered desingularization}\label{desingsec}

\subsection{Terminology}

\subsubsection{Maximal domination}
Assume that $S$ is a noetherian scheme and let $\eta_1\..\eta_n$ be the generic points of the irreducible components of $S$. We call $\eta_S=\coprod_i\eta_i=\Spec(\prod_i k(\eta_i))$ the {\em scheme of generic points} of $S$. A morphism of noetherian schemes $X\to S$ is called {\em maximally dominating} if it takes $\eta_X$ to $\eta_S$. This happens if and only if the generic fiber $X\times_S\eta_S$ is dense in $X$.

\subsubsection{Alterations}\label{altsec}
A morphism $X\to S$ is called an {\em alteration} if it is proper, surjective, maximally dominating, and generically finite. For example, if $S$ is qe and $X$ is the normalization of the reduction of $S$ then $X\to S$ is an alteration in our sense.

\begin{rem}
We consider alterations of general schemes for the sake of formulating resolution results in maximal generality. However, the core case is when a morphism $X\to S$ we want to resolve is between integral schemes, and the general case reduces to this easily.
\end{rem}

\subsubsection{Universal $\calP$-resolvability}
Let $X$ be a noetherian scheme and let $\calP$ be a set of primes. Assume that for any alteration $Y\to X$ and a nowhere dense closed subset $Z\subset Y$ there exists a $\calP$-alteration $f{\colon}Y'\to Y$ such that $Y'$ is regular and $Z'=f^{-1}(Z)$ is an snc divisor. Then we say that $X$ is {\em universally $\calP$-resolvable}. If, in addition, $f$ can be chosen separable then we say that $X$ is {\em universally separably $\calP$-resolvable}.

\subsection{Resolution of relative curves}
In \cite[3.1.6]{X} a finite type morphism $Y\to X$ was called {\em pseudo-projective} if it is a composition of a local isomorphism $Y\to\oX$ and a projective morphism $\oX\to X$. This is a technical notion needed to deal with non-separated morphisms, while for separated morphisms pseudo-projectivity is equivalent to quasi-projectivity. The reader that is only interested in the separated case can assume in the sequel that $f\:Y\to X$ is separated and replace ``pseudo-projective" with ``quasi-projective".

\begin{theor}\label{relcurveth}
Let $S$ be a noetherian qe scheme with scheme of generic points $\eta$, let $f{\colon}X\to S$ be a maximally dominating morphism of finite type, and let $Z\subset X$ be a nowhere dense closed subset. Assume that $S$ is universally $\calP$-resolvable (resp. universally separably $\calP$-resolvable) for a set of primes $\calP$ such that $\cha(X)\subseteq\calP$, and assume that $X_\eta=X\times_S\eta$ is a smooth curve over $\eta$ and $Z_\eta=Z\times_S\eta$ is \'etale over $\eta$. Then there exist a projective $\calP$-alteration (resp. a separable projective $\calP$-alteration) $a{\colon}S'\to S$, a projective modification $b{\colon}X'\to (X\times_SS')^{\rm pr}$, where $(X\times_SS')^{\rm pr}$ is the proper transform of $X$,
\begin{equation*}
\xymatrix{X'\ar[r]^(.35)b\ar[rrd]_{f'} & (X\times_SS')^{\rm pr}\ar@{^{(}->}[r] & X\times_SS' \ar[d]\ar[r]& X\ar[d]^f\\
& & S' \ar[r]^a & S}
\end{equation*}
and divisors $W'\subset S'$ and $Z'\subset X'$ such that $S'$ and $X'$ are regular, $W'$ and $Z'$ are snc, the morphism $f'{\colon}X'\to S'$ is pseudo-projective, $(X',Z')\to(S',W')$ is log smooth, and $Z'=c^{-1}(Z)\cup f'^{-1}(W')$, where $c$ denotes the alteration $X'\to X$.
\end{theor}
\begin{proof}
The particular case of the theorem when $\calP=\{l\}'$ is \cite[Theorem~3.4]{X}, and our proof is very close to the proof of \cite[Theorem~3.4]{X}. In fact, all arguments apply verbatim to our case once one replaces $l'$-alterations with $\calP$-alterations, with the only exception being in step 4. So we only repeat the main line of the proof in \cite[Theorem~3.4]{X} and present the modified step 4 with all details.

Step 0. As in the beginning of the proof of \cite[Theorem~3.4]{X}, we split $Z$ into the union of the horizontal part $Z_h$, which is the closure of $Z_\eta$, and the remaining part $Z_v$, called vertical. Also, as in that proof one notes that if $S_1\to S$ is a (resp. separable) projective $\calP$-alteration and $b_1{\colon}X_1\to(X\times_SS_1)^{\rm pr}$ is a projective modification, then it suffices to prove the theorem for $f_1{\colon}X_1\to S_1$ and the preimage $Z_1\subset X_1$ of $Z$. Thus, in such situation we can replace $f$ by $f_1$. In particular, we can replace $S$ by the reduction of its normalization. Furthermore, we can work separately over each connected component of $S$. Thus, we can assume that $S$ is integral, similarly to \cite[Theorem~3.4]{X}.

Steps 1--2 of \cite[Theorem~3.4]{X} extend verbatim to our situation, and reduce the general case to the following one: $f$ is flat quasi-projective and $Z_h\to S$ is flat. Then step 3 produces a diagram
\begin{equation*}
\xymatrix{\oX \ar[rd]_\of\ar[r] & X\times_S\oS \ar[d]\ar[r]& X\ar[d]^f\\
& \oS \ar[r]^\oa & S}
\end{equation*}
where $\oS$ is normal, $\oa$ is a separable alteration, $\oX\to X\times_S\oS$ is a projective modification and
$(\oX,\oZ_h)$ is a semistable multipointed $\oS$-curve, where $\oZ_h\subset\oX$ is the horizontal part of the preimage $\oZ$ of $Z$.

Step 4. {\it We can assume that $\oa$ is a tame $G$-Galois covering.} First, we note that semistability is preserved by base changes, hence we can enlarge $\oa$ in the sense that we can replace it by any separable alteration $\oS'\to S$ that factors through $\oS$. By Theorem \ref{altdistth}, enlarging $\oa$ we can achieve that it factors into a composition of a tame Galois covering $\oS\to S'$ and a $\calP$-alteration $a\:S'\to S$. By step 0 we can replace $S$ and $X$ by $S'$ and $X'=X\times_SS'$ and then $\oa$ is as required.

The rest is almost the same as in \cite[Theorem~3.4]{X}. By step 5, the action of $G=\Aut_\oS(S)$ canonically lifts from $X\times_S\oS$ to $\oX$. By step 6, the action on $\oX$ is inertia specializing. The latter very mild condition means that if $y\in\oX$ specializes $x\in\oX$ then $G_x\subseteq G_y$, and it is automatically satisfied whenever $\oX$ is covered by $G$-equivariant separated open subschemes, see \cite[3.1.2--3.1.4]{X}. Step 7 of loc.cit. is redundant as $\oa$ is finite by step 4. As in step 8, we choose a closed subset $W\subsetneq S$ such that $\oa$ is \'etale over $S\setminus W$, $\of(\oZ_v)\subset\oW:=\oa^{-1}(W)$, and $\of\:\oX\to\oS$ is smooth over $\oS\setminus\oW$. As in step 9, we apply our assumption on resolvability of $S$ to reduce to the case when $S$ is regular and $W$ is snc. Then, as in step 10, $(\oS,\oW)\to(S,W)$ is a Kummer \'etale morphism of log regular log schemes, and hence the composition $(\oX,\oT)\to(\oS,\oW)\to(S,W)$ is log regular. The group $G$ acts tamely on $(\oX,\oT)$ and trivially on $(S,W)$, hence Gabber's modification theorem \cite[Theorem~3.1.5]{X} applies and produces a modification $(X',T')$ of $(\oX,\oT)/G$ such that $(X',T')\to(X,S)$ is log smooth. It remains to replace $(X',T')$ by a log scheme obtained by applying the monoidal desingularization functor $\tilcalF^\rmlog$ of \cite[3.4.9]{VIII}.
\end{proof}

\subsection{The main theorem}
Now we are in a position to prove the main desingularization result of the paper.

\begin{theor}\label{mainth}
Let $f{\colon}X\to S$ be a maximally dominating morphism of finite type between noetherian qe schemes, let $Z\subset X$ be a nowhere dense closed subset, and assume that $S$ is universally $\calP$-resolvable for a set of primes $\calP$ such that $\cha(S)\subseteq\calP$. Then,

(i) $X$ is universally $\calP$-resolvable.

(ii) There exist projective $\calP$-alterations $a{\colon}S'\to S$ and $b{\colon}X'\to X$ with regular sources, a pseudo-projective morphism $f'{\colon}X'\to S'$ compatible with $f$
\begin{equation*}
\xymatrix{X' \ar[d]^{f'}\ar[r]^b & X \ar[d]^f\\
S' \ar[r]^a & S&}
\end{equation*}
and snc divisors $W'\subset S'$ and $Z'\subset X'$ such that $Z'=b^{-1}(Z)\cup f'^{-1}(W')$ and the morphism $(X',Z')\to(S',W')$ is log smooth.

(iii) If $S=\Spec(k)$, where $k$ is a perfect field, then one can achieve in addition to (ii) that $a$ is an isomorphism and the alteration $b$ is separable. In particular, $X$ is universally separably $\calP$-resolvable in this case.
\end{theor}
\begin{proof}
The theorem generalizes \cite[Theorem~3.5]{X} from the case when $\calP=\{l\}'$. The proof in our case copies the proof of \cite[Theorem~3.5]{X} with the only difference that $l'$ is replaced by $\calP$ and the reference to \cite[Theorem~3.4]{X} in step 6 of the proof is replaced by the reference to Theorem~\ref{relcurveth}. These modifications are absolutely straightforward, so we illustrate them with a single example: in step 4 one should consider an infinite $\calP$-extension $\ok/k$ of a finite field $k$ instead of an $\l'$-extension. This is always possible since $\calP$ contains $\cha(k)$ and hence is not empty.
\end{proof}

\subsubsection{Absolute $\cha(X)$-altered desingularization}
In order to practically use Theorem \ref{mainth} one should start with a class of resolvable schemes. Currently, the best known result for schemes $S$ with a non-empty $\cha(S)$ is desingularization of qe threefolds due to Cossart and Piltant, see \cite{CP}. So, assume that $X$ admits a morphism of finite type $f\:X\to S$ such that $S$ is qe and of dimension at most 3. We can assume that $f$ is dominant, and then the set $\cha(S)\setminus\cha(X)$ is finite. So, replacing $S$ by its open subscheme containing the image of $X$ we can assume that $\cha(X)=\cha(S)$. Since, $S$ is universally resolvable by \cite{CP}, Theorem~\ref{mainth}(i) implies that $X$ is universally $\cha(X)$-resolvable. This proves the first claim of Theorem~\ref{absth}, while the second claim follows from Theorem~\ref{mainth}(iii).

\subsubsection{Desingularization of morphisms}
Once we know that any scheme $S$ of finite type over a qe threefold is universally $\cha(S)$-resolvable, Theorem~\ref{mainth}(ii) implies Theorem~\ref{relth} from the Introduction.

\begin{rem}
One can slightly strengthen the assertion of Theorem~\ref{relth} by requiring that $X'$ is a $\cha(X)$-alteration of a subscheme of $X\times_SS'$ (rather than a $\cha(S)$-alteration). We only outline the main idea, ignoring divisors for simplicity. Factor $X$ through an open subscheme $S_0\subseteq S$ such that $\cha(X)=\cha(S_0)$ and apply Theorem~\ref{relth} to resolve $X\to S_0$ by $X''\to S''_0$. This only involves $\cha(X)$-alterations. Then use the assumption on $S$ to find a $\cha(S)$-alteration $S'_0\to S_0$ that factors through $S''_0$ and embeds into an alteration $S'\to S$ such that $S'$ is regular. Finally, replace $X'$ with $X''\times_{S''_0}S'_0$ and apply the monoidal desingularization functor $\tilcalF^\rmlog$ of \cite[3.4.9]{VIII}.
\end{rem}

\bibliographystyle{amsalpha}
\bibliography{p_alterations}

\end{document}